\documentclass[11pt,letterpaper]{amsart}

\usepackage{mathrsfs}
\usepackage{cite}
\usepackage{color}
\usepackage{amssymb}
\theoremstyle{plain}

\author[K. Zhang]{Kaiqiang Zhang
}
\address{Kaiqiang Zhang, School of Computer Science and Technology, Dongguan University of Technology, 523808 Dongguan, China}
\email{math.zhangkq@dgut.edu.cn}

\subjclass[2020]{35B44, 35C06, 35B35, 35K57, 35Q92}

\begin{document}

\numberwithin{equation}{section}
\renewcommand{\theequation}{\arabic{section}.\arabic{equation}}
\theoremstyle{plain}
\newtheorem{exam}{Example}[section]
\newtheorem{theorem}[exam]{Theorem}
\newtheorem{lemma}[exam]{Lemma}
\newtheorem{remark}[exam]{Remark}
\newtheorem{hyp}[exam]{Hypothesis}
\newtheorem{proposition}[exam]{Proposition}
\newtheorem{definition}[exam]{Definition}
\newtheorem{corollary}[exam]{Corollary}
\newtheorem{analytic}[exam]{Analytic extension principle}
\newtheorem{notation}[exam]{Notation}
\setlength{\baselineskip}{1.5\baselineskip}

\title[Inhomogeneous parabolic equation]{{Finite time blow-up for an inhomogeneous parabolic equation}}
\begin{abstract}
We consider the inhomogeneous nonlinear heat equation
\[
\partial_t u-\Delta u=|u|^{p-1}u+f(x),\qquad x\in\mathbb{R}^3,\quad p>5,
\]
where \(f\in L^\infty\cap C^{0,1}(\mathbb{R}^3)\). For every sufficiently large integer \(n\), we construct a codimension-\(n\) Lipschitz manifold of non-radial initial data whose corresponding solutions blow up in finite time and whose rescaled profiles converge to the prescribed self-similar profile \(\Phi_n\) of the homogeneous equation.

The main novelty is to show that the finite-codimensional stability mechanism for self-similar blow-up, developed in the work of Collot, Raphaël and Szeftel [Mem. Amer. Math. Soc. (2019)] for the homogeneous equation, is robust under the addition of a bounded, Lipschitz spatially inhomogeneous source term. In contrast with the homogeneous problem, the equation considered here has no exact scaling invariance, which is a key ingredient in many previous constructions. We expect that the  framework developed in this work may also be useful for related problems in which exact scaling invariance is broken.
\end{abstract}

\maketitle

\section{Introduction}

We consider the inhomogeneous parabolic equation
\begin{equation}\label{1.1}
\left\{\begin{array}{l}
\partial_{t} u-\Delta u=|u|^{p-1}u+f(x),  \\
u(0,x)=u_0(x),
\end{array}\quad \text{in}\ \mathbb{R}^{d},\ p>1,\right.
\end{equation}
where $f(x)\in L^\infty\cap C^{0,1}(\mathbb{R}^d)$.  
We refer the reader to the recent works 
\cite{Ishige-Kawakami-Takada-2026,Hisa-Jin-2020} for the local well-posedness results. 
Finite-time blow-up solutions for problem \eqref{1.1} have been studied in 
\cite{Bandle-Levine-Zhang-2000,Zhang-1999,Zhang-1998,Wang-Yin-You-2023,Luo-Yin-You-2024,Luo-Yin-You-2025}, but most of the works focus on blow-up criteria and lifespan estimates. 
They leave open the problem of constructing smooth localized initial data whose corresponding solutions blow up with a precise asymptotic profile, as well as describing the associated stability structure.
This question has been answered affirmatively for the Fujita equation 
\cite{Collot-2020,Collot-Pierre-2019}, the Keller--Segel equation 
\cite{Colllot-Zhang,Li-Zhou-2025},  and Hénon-type parabolic equations 
\cite{Irfan-Sarah-Birgit-2026}. 
For problem \eqref{1.1}, to the best of our knowledge, no such precise description of the blow-up dynamics has been obtained so far.
The purpose of this paper is to address this question.

Let \(T\) denote the maximal existence time of a solution \(u\). 
If \(T<\infty\), we say that blow-up occurs in the sense that
\[
\limsup_{t\to T^-}\|u(t)\|_{L^\infty(\mathbb{R}^d)}=+\infty.
\]
A point \(x_0\in\mathbb R^d\) is called a blow-up point if, for every
\(\delta>0\),
\[
\limsup_{t\to T^-}
\|u(t)\|_{L^\infty(B(x_0,\delta))}=+\infty.
\]

\subsection{Background}
\subsubsection{Finite time blow-up solutions of the classical nonlinear heat equation}
When \(f(x)\equiv0\), equation \eqref{1.1} reduces to the classical nonlinear heat
equation
\begin{equation}\label{0.1}
\partial_t u-\Delta u=|u|^{p-1}u,\ \ \ x\in \mathbb{R}^d.
\end{equation}
The study of finite-time blow-up solutions to \eqref{0.1} goes back to the seminal work
of Fujita \cite{Fujita} and has since developed into a central topic in nonlinear
parabolic equations.  Two important directions have emerged: one concerns criteria for global existence versus finite-time blow-up \cite{Lee-1992,Fujita,Zhang-Li-2026,Ikehata-2010,Harada-2026}, while the other focuses on the precise dynamics of singularity formation, including blow-up rates, profiles, and stability properties \cite{Collot-2020,Pino-Musso-Wei-2021,Collot-2018,Pino-Musso-Wei-zhou-2020}.
We refer to \cite{Quittner-Souplet-2019} for a comprehensive
account.

The characterization of blow-up solutions for equation \eqref{0.1} has been the subject of an extensive body of literature, and we recall below the key result directly relevant to our analysis.
The equation \eqref{0.1} is invariant under the following scaling: for any $\lambda>0$, if $u(t,x)$ solves \eqref{0.1}, then so is
$$
u_\lambda(t,x)=\lambda^{\frac{2}{p-1}}u(\lambda^2t,\lambda x),
$$ with the rescaled initial data $\lambda^{\frac{2}{p-1}}u_0(\lambda x)$.
Discarding the diffusion term in \eqref{0.1} yields the equation $u_t=u^p$, whose spatially homogeneous solution takes the form $(p-1)^{-\frac{1}{p-1}}(T-t)^{-\frac{1}{p-1}}$. It is said that the
blow-up is of type I if a solution $u$ of \eqref{0.1}
blows up at the ODE rate, i.e.,
\[
\limsup_{t\to T} (T-t)^{\frac{1}{p-1}}\lVert u(t) \rVert_{L^\infty} < \infty,
\]otherwise, the blow-up is of type II. Self-similar blow-up  solutions provide important examples of type I blow-up dynamics. They have the
form:
$$
u(t,x)=\frac{1}{(T-t)^{\frac{1}{p-1}}}v(y),\ \ \ y=\frac{x}{\sqrt{T-t}},
$$
where the profile \(v\) solves
\begin{equation}\label{self-similar-equations}
  \Delta v-\frac{1}{2}\Lambda v+v^p=0,\ \ \Lambda v=\frac{2}{p-1}v+y\cdot\nabla v.
\end{equation}
The solutions of \eqref{self-similar-equations} have been constructed by the ODE approach in \cite{Budd-Norbury-1987,Budd-Qi-1989,Troy-1987}.
For \(d=3\) and \(p>5\), using the normalization $y={x}/{\sqrt{2(T-t)}}$, under which the self-similar profile equation becomes $\Delta v-\Lambda v+v^p=0$.
 Collot, Rapha\"el and Szeftel
\cite{Collot-Pierre-2019} constructed a countable family of smooth self-similar profiles 
\(\Phi_n\) by combining matched asymptotic expansions with the Banach fixed point theorem. They also established a finite-codimensional non-radial stability analysis around these profiles. These profiles $\Phi_n$ constitute the building block of our paper.
For self-similar blow-up solutions of other evolution equations, we refer the reader to \cite{Liu-Raees-2025,Roland-Ostermann-2023} for the wave equation, \cite{Roland-Schörkhuber-2026} for the Schrödinger equation, and \cite{Colllot-Zhang,Li-Zhou-2025,Nguyen-Wang-Zhang-2026} for the Keller–Segel system.
\subsubsection{Finite time blow-up solution of the inhomogeneous nonlinear heat equation}
It is natural to ask whether the addition of the spatially inhomogeneous term \(f(x)\) can induce a blow-up mechanism different from that of the homogeneous equation.  A striking difference is that the inhomogeneous problem \eqref{1.1} no longer 
possesses the scaling invariance of the homogeneous equation \eqref{0.1}.  This lack of invariance suggests that the spatial profile of \(f(x)\) may interact with the nonlinear dynamics and  affect both the location and the structure of blow-up.

The inhomogeneous problem \eqref{1.1} is less understood from the viewpoint of
precise singularity formation. Previous works have mainly focused on blow-up
criteria, global existence versus finite-time blow-up, and lifespan estimates.
For instance, Bandle, Levine and Q. Zhang \cite{Bandle-Levine-Zhang-2000} showed that for $d\ge3$, $\int_{\mathbb{R}^d}f(x)dx>0$ and $1<p<d/(d-2)$, all the solutions of \eqref{1.1} blow up  in finite time,
while if  $p>d/(d-2)$ there exist both global and non-global solutions. Subsequently, the critical exponent $p=n/(n-2)$ was shown to belong to the blow-up regime in \cite{Kartsatos-2004,Zeng-2007}.  Related results on complete Riemannian manifolds
were obtained in \cite{Zhang-1999,Zhang-1998}. 
More recently, the influence
of the source term \(f\) on the lifespan of solutions has been studied in
\cite{Wang-Yin-You-2023,Luo-Yin-You-2024,Luo-Yin-You-2025}. For solutions with zero initial data, the life span of the solution to the problem \eqref{1.1} was analyzed in \cite{Wang-Yin-You-2023,Luo-Yin-You-2024}, and a sharp life span estimate was later established for non-zero initial data \cite{Luo-Yin-You-2025}.

Despite the literature on the blow-up criteria and blow-up time, to our knowledge, these existing results do not describe the detailed asymptotic structure of
blow-up solutions.  Precisely, no previous result appears to describe the blow-up rate, the blow-up profile, and the blow-up point of such solutions. In this paper, we employ modulation techniques and a dynamical rescaling formulation 
to construct finite-time blow-up solutions whose rescaled profiles converge to a prescribed \(\Phi_n\) of the self-similar profile family of the homogeneous equation.
This construction strategy is motivated by the singularity formation results of the nonlinear
heat equation \cite{Collot-Pierre-2019,LiSun-2026} and Keller-Segel equation \cite{Colllot-Zhang}.  For other techniques involving the construction of blow-up solutions for the parabolic equations, we refer the reader to \cite{Hou-Nguyen-Wang-2026} for the $L^2$-based methodology and \cite{Dávila-Pino-Wei-2020,Cortázar-Pino-Musso-2020} for the inner-outer gluing method.

\subsection{Statement of the main result}
This paper aims to fill a gap in the literature concerning the precise
asymptotic behavior of finite-time blow-up solutions of the inhomogeneous
parabolic equation \eqref{1.1}. 

Most previous constructions of self-similar blow-up solutions rely crucially on the scaling invariance of the underlying equation \cite{Collot-Pierre-2019,Colllot-Zhang,Glogi-2025}. This invariance leads, after passing to self-similar variables, to an autonomous renormalized flow around a stationary self-similar profile. In the present inhomogeneous problem, however, the forcing term \(f(x)\)
breaks the scaling invariance. After passing to the backward self-similar
variables
\[
s=-\log(T-t),\qquad y=\frac{x-x(t)}{\sqrt{T-t}},
\]
the forcing term becomes
\[
e^{-\frac{ps}{p-1}} f\bigl(x(t)+e^{-s/2}y\bigr),
\]
which depends explicitly on the similarity time \(s\) and the modulation
parameter \(x(t)\), and hence gives rise to a non-autonomous perturbation of
the renormalized flow.

This observation leads to the main difficulty of the paper. Unlike in the homogeneous case, the renormalized dynamics is no longer autonomous. The main point of the analysis is to show that this additional non-autonomous perturbation remains lower order along the blow-up dynamics and that the finite-codimensional stability mechanism constructed in \cite{Collot-Pierre-2019} persists in the present setting. This persistence does not require any smallness assumption on \(\|f\|_{L^\infty(\mathbb R^3)}\). This is achieved by choosing the initial modulation scale \(\lambda_0\) sufficiently small, equivalently the initial renormalized time \(s_0\) sufficiently large, so that the rescaled forcing term is a lower-order perturbation throughout the bootstrap regime. We expect that the strategy developed here may be useful for other parabolic equations in which exact scaling invariance is absent; see, for example, the equation studied in \cite{Zhangk1,SZ}.

We use the self-similar profiles $\Phi_n$ constructed in
\cite{Collot-Pierre-2019} to construct the finite-time blow-up solutions of \eqref{1.1}, see  Proposition \ref{self-similar profiles} in the next section for the asymptotic behavior of $\Phi_n$.
The following result establishes the existence and stability of finite-time blow-up solutions  to \eqref{1.1} for 
 $d=3$ and $p>5$. 
\begin{theorem}\label{thm}
Let $d=3$, $p>5$ and $f(x)\in L^\infty\cap C^{0,1}(\mathbb{R}^3)$. Then, for every sufficiently large integer \(n\),
for every fixed \(x_0\in\mathbb R^3\), and for every sufficiently small
\(\lambda_0>0\), there exists a codimension-\(n\) Lipschitz manifold
$
\mathcal M_{n,\lambda_0,x_0}\subset L^\infty(\mathbb R^3)$
containing non-radial initial data such that, for every
\(u_0\in\mathcal M_{n,\lambda_0,x_0}\), the corresponding solution to
\eqref{1.1} blows up at a finite time \(T\) satisfying
$
\frac18\lambda_0^2\le T\le 2\lambda_0^2$. Moreover, there exist a modulation parameter $x(t)\in\mathbb R^3$ and a remainder term $\bar u$ such that 
 $$
 u(t,x)=\frac{1}{[2(T-t)]^{\frac{1}{p-1}}}\left[\Phi_n\left(y\right)+\bar{u}\left(t,y\right)\right],\ \ y=\frac{x-x(t)}{\sqrt{2(T-t)}}.
 $$
The remainder satisfies  
 \begin{equation}\label{stability}
 \lim_{t\to T}||\bar{u}(t)||_{L^\infty(\mathbb{R}^3)}=0,
 \end{equation}
and the modulation parameter satisfies
 \begin{equation}\label{blow-up point}
x(t)\to x(T),   \ \ t\to T^-,
 \end{equation}
 where $x(T)$ is a blow-up point of the solution. In addition, the blow-up time map  $T:\mathcal M_{n,\lambda_0,x_0}\to \mathbb R$ is Lipschitz continuous with respect to the \(L^\infty\) topology. More precisely, for any \(u_0,u_0'\in\mathcal M_{n,\lambda_0,x_0}\), one has $|T(u_0)-T(u_0')| \lesssim  \|u_0-u_0'\|_{L^\infty(\mathbb R^3)}$. 
\end{theorem}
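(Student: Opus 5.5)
We follow the scheme of \cite{Collot-Pierre-2019}, treating the forcing term as a non-autonomous, exponentially small perturbation of the renormalized flow.

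\textbf{Step 1: renormalized equation.} We introduce modulation parameters $\lambda(t)>0$ and $x(t)\in\mathbb R^3$ with $\lambda(0)=\lambda_0$, $x(0)=x_0$, and the renormalized time $\frac{ds}{dt}=\lambda^{-2}$. We write
\[
u(t,x)=\lambda^{-\frac{2}{p-1}}\bigl(\Phi_n+\varepsilon\bigr)(s,y),\qquad y=\frac{x-x(t)}{\lambda}.
\]
The perturbation then satisfies
\[
\partial_s\varepsilon-\mathcal L_n\varepsilon=\Bigl(\frac{\lambda_s}{\lambda}+1\Bigr)\Lambda(\Phi_n+\varepsilon)+\frac{x_s}{\lambda}\cdot\nabla(\Phi_n+\varepsilon)+NL(\varepsilon)+F,
\]
where $\mathcal L_n=\Delta-\Lambda+p\Phi_n^{p-1}$ and
\[
F(s,y)=\lambda^{\frac{2p}{p-1}}f\bigl(x(t)+\lambda y\bigr).
\]
Since $\lambda\approx\lambda_0e^{-(s-s_0)}$, we have $\|F\|_{L^\infty}\lesssim\|f\|_{L^\infty}e^{-\frac{2p}{p-1}s}$ with $e^{-s_0}\sim\lambda_0$. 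This bound holds without any smallness of $f$. Moreover, since $f\in C^{0,1}$, we get $\|\nabla_yF\|_{L^\infty}\lesssim\lambda^{\frac{2p}{p-1}+1}\|f\|_{C^{0,1}}$, which is what is needed for the higher-derivative and weighted norms.

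\textbf{Step 2: spectral decomposition and bootstrap.} From the spectral analysis of $\mathcal L_n$ in \cite{Collot-Pierre-2019}, we take the following:
\begin{itemize}
\item finitely many unstable eigenvalues, whose number relates to $n$;
\item the symmetry modes $\Lambda\Phi_n$ (eigenvalue $1$, from time translation) and $\partial_i\Phi_n$ (eigenvalue $1/2$, from space translation);
\item a spectral gap on the complement, in a weighted $L^2_\rho$ space combined with $L^\infty$ and weighted-derivative norms in the exterior region.
\end{itemize}
The symmetry directions are removed by orthogonality conditions fixing $\lambda$ and $x$; the modulation equations give $|\lambda_s/\lambda+1|+|x_s|/\lambda\lesssim\|\varepsilon\|+\|F\|$. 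The remaining unstable modes $a_1,\dots,a_n$ are assigned to the codimension-$n$ part. We then set up a bootstrap with the following bounds:
\begin{itemize}
\item $\|\varepsilon\|_{\rm stable}\le Ke^{-\delta s}$, with $\delta$ smaller than the gap;
\item $|a_k|\le e^{-\delta s}$;
\item pointwise and derivative bounds on $\varepsilon$ in the far region $|y|\gtrsim e^{s/2}$, where $\Phi_n$ decays like $|y|^{-2/(p-1)}$.
\end{itemize}
Energy and weighted estimates, together with parabolic comparison in the exterior, improve the stable bounds. The forcing contributes $\int_{s_0}^s e^{-\mu(s-\sigma)}e^{-\frac{2p}{p-1}\sigma}d\sigma\ll e^{-\delta s}$, since $\frac{2p}{p-1}>\delta$. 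Taking $s_0$ large, i.e.\ $\lambda_0$ small, absorbs all constants depending on $\|f\|$. Nonradiality costs nothing extra, because the spectral analysis there is already non-radial.

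\textbf{Step 3: Brouwer argument, blow-up and Lipschitz structure.} The unstable modes obey $\dot a=Aa+O(e^{-2\delta s})$ with $A$ having positive spectrum. A standard Brouwer/outgoing-flux argument then gives, for each stable initial datum, some initial $a(s_0)$ for which the bootstrap never exits. We then obtain the following consequences:
\begin{itemize}
\item Integrating $\lambda_s/\lambda=-1+O(e^{-\delta s})$ gives $\lambda^2\approx 2(T-t)$ with $T=\int_0^\infty\lambda^2\,ds<\infty$ and $T\in[\lambda_0^2/8,2\lambda_0^2]$. This is the normalization $y=x/\sqrt{2(T-t)}$ up to an $o(1)$ reparametrization, and it yields \eqref{stability}.
\item $|x_t|=|x_s|\lambda^{-2}\lesssim\lambda^{-1}e^{-\delta s}$ is integrable in $t$, which gives \eqref{blow-up point}.
\item Since $\|u(t)\|_{L^\infty(B(x(T),\delta))}\ge\lambda^{-2/(p-1)}(\Phi_n(0)-o(1))$, the point $x(T)$ is a blow-up point.
\end{itemize}

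\textbf{Main obstacle.} The main obstacle is the Lipschitz dependence, both of the manifold and of the map $T$. For this we compare two solutions in the difference variable, aligning them via their own modulation parameters. The non-autonomous term then produces
\[
\lambda^{\frac{2p}{p-1}}f(x+\lambda y)-\lambda'^{\frac{2p}{p-1}}f(x'+\lambda'y),
\]
which must be bounded by $e^{-\frac{2p}{p-1}s}\bigl(|\log\lambda-\log\lambda'|+|x-x'|/\lambda+\ldots\bigr)$. Here the Lipschitz regularity of $f$ is used, and the $y$-growth it produces must be controlled by the weights. We would first try to show that the unstable-mode differences satisfy a contraction after this bound. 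This would give the unstable coordinates as Lipschitz graphs over the stable data, and $|T-T'|\lesssim\|u_0-u_0'\|_{L^\infty}$ would follow by integrating the difference of the $\lambda$ equations.
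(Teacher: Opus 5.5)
\textbf{Existence and blow-up part.} Your argument here follows the paper's route, and it is fine:
\begin{itemize}
\item orthogonality to $\Lambda\Phi_n$ and $\partial_k\Phi_n$ fixes $\lambda$ and $x$, even though these are no longer symmetry directions;
\item the $L^2_\rho$ energy estimate uses the spectral gap;
\item the $L^\infty$ bound comes from parabolic comparison in the exterior;
\item Brouwer acts on the $n$ modes $\psi_2,\dots,\psi_{n+1}$;
\item the forcing, of size $e^{-\frac{2p}{p-1}s}$, is absorbed by taking $s_0$ large rather than $f$ small.
\end{itemize}
There are two slips. In your normalization ($\lambda_s/\lambda\approx-1$, $\lambda^2\approx 2(T-t)$), the eigenvalues of $\Delta-\Lambda+p\Phi_n^{p-1}$ on $\Lambda\Phi_n$ and $\partial_k\Phi_n$ are $2$ and $1$, not $1$ and $1/2$. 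Also, $T=\int_{s_0}^\infty\lambda^2\,ds$, not $\int_0^\infty$.

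\textbf{The gap: the Lipschitz part.} The theorem asserts it, but you only announce it. Brouwer yields, for each stable datum, \emph{some} trapped choice of $a(s_0)$. It gives neither uniqueness nor continuity of that choice, so without a further argument there is not even a graph. Also, ``a contraction for the unstable-mode differences'' is not the mechanism: forward in time $\Delta a_j$ is expanded by $e^{\mu_j s}$.

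The missing idea is to use the trapping itself.
\begin{enumerate}
\item Write the equation for $\Delta\varepsilon=\varepsilon^{(1)}-\varepsilon^{(2)}$. It inherits the orthogonality conditions, so the spectral gap still applies.
\item Project it on $\psi_1,\psi_j,\partial_k\Phi_n$ and use your Lipschitz bound on the forcing difference. This gives
\[
\Bigl|\Delta\Bigl(\frac{\lambda_s}{\lambda}\Bigr)\Bigr|+\Bigl|\Delta\Bigl(\frac{x_s}{\lambda}\Bigr)\Bigr|+\sum_{j}\bigl|(\Delta a_j)_s-\mu_j\Delta a_j\bigr|\lesssim e^{-\mu s}\Bigl(\|\Delta\varepsilon\|_{L^2_\rho}+\sum_j|\Delta a_j|\Bigr)+e^{-\frac{2p}{p-1}s}\bigl(|\Delta\ell|+|\Delta x|\bigr),
\]
with $\Delta\ell=\log(\lambda^{(1)}/\lambda^{(2)})$ and $\Delta\ell(s_0)=\Delta x(s_0)=0$. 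The $|y|$-growth of the forcing difference is harmless, because the whole difference analysis is done in $L^2_\rho$. The $L^\infty$ bounds on $v^{(i)}$ are used only in the nonlinear term.
\item Both solutions satisfy $|a^{(i)}(s)|\le e^{-\mu s}$ for all $s\ge s_0$. Hence the coefficient of $e^{\mu_j s}$ in the Duhamel formula must vanish:
\[
\Delta a_j(s_0)=-\int_{s_0}^{\infty}e^{-\mu_j(\tau-s_0)}\bigl[(\Delta a_j)_\tau-\mu_j\Delta a_j\bigr](\tau)\,d\tau .
\]
\item Combined with the energy inequality for $\Delta\varepsilon$, this closes to
\[
|\Delta a(s_0)|\lesssim\|\Delta\varepsilon(s_0)\|_{L^2_\rho}\lesssim\|\bar u_0^{(1)}-\bar u_0^{(2)}\|_{L^\infty}.
\]
\end{enumerate}
This single estimate gives uniqueness of the trapped $a(s_0)$ and the Lipschitz graph at once. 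It also yields $|\Delta\ell(s)|\lesssim\|\Delta\varepsilon(s_0)\|_{L^2_\rho}$ uniformly in $s$. Then $|T^{(1)}-T^{(2)}|\le\int_{s_0}^\infty|(\lambda^{(1)})^2-(\lambda^{(2)})^2|\,ds$ gives the blow-up time estimate.
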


\textit{Comments on the result.}

1.  \textit{The choice of the parameter $n$}. The restriction \(n\gg1\) comes from the construction of the self-similar
profiles \(\Phi_n\) in \cite{Collot-Pierre-2019}. The index \(n\) corresponds to
the number of zeros of
$
\frac{2}{p-1}\Phi_n+x\cdot\nabla\Phi_n.
$
The existence and spectral properties of these profiles are currently available
only for sufficiently large \(n\).

2. \textit{Construction of the manifold}. 
We set $\phi_{j,n}=\psi_{j+1,n}$ ($j\ge1$) where $\psi_{j,n}$ are the eigenfunctions of the linear operator $L_n$ which is defined in \eqref{linear}; see Proposition
\ref{prop:spectral-gap}. We define
$$
V_n^\perp=\{\bar{u}\in L^\infty(\mathbb{R}^3);(\bar{u},\phi_{j,n})_\rho=0,\ 1\le j\le n\},
$$
where $(\cdot,\cdot)_\rho$ denotes the weighted inner product defined in \eqref{product}.
There exist $n$ Lipschitz functions $$a_j:V_n^\perp\to \mathbb{R},\ \ 1\le j\le n.$$
The manifold $\mathcal M_{n,\lambda_0,x_0}$ is then obtained as the graph of these functions
over the stable subspace $V_n^\perp$. For fixed $0<\lambda_0\ll1$ and $x_0\in\mathbb R^3$, define
$$
\mathcal{M}_{n,\lambda_0,x_0}=\left\{\frac{1}{\lambda_0^{\frac{2}{p-1}}}\left(\Phi_n+\bar{u}_0+\sum_{j=1}^na_j(\bar{u}_0)\phi_{j,n}\right)\left(\frac{x-x_0}{\lambda_0}\right);\bar{u}_0\in V_n^\perp,||\bar{u}_0||_{L^\infty}\ll1\right\}.
$$
This construction shows, in particular, that $\mathcal M_{n,\lambda_0,x_0}$ contains
non-radial initial data.
In the homogeneous case, the scaling invariance allows
one to choose any $\lambda_0>0$ to construct the initial data manifold $\mathcal M_{n,\lambda_0,x_0}$, and there holds $a_j(0)=0$ for $1\le j\le n$.
\\
3. \textit{Regularity of the manifold $\mathcal{M}_{n,\lambda_0,x_0}$}.
 It is natural to expect that these functions $a_j$ enjoy higher regularity, for
instance that $a_j$ are $C^k$ for every $k\ge 1$, or at least $C^1$, as in
the construction of the threshold manifold in \cite{Martel-Merle-Nakanishi-2016}. However,
such a higher regularity result is not obtained by the present argument and
remains an open problem.
\\
4. \textit{The existence of smooth and
localized initial data}. Indeed, one may simply take
$
\bar u_0=0\in V_n^\perp .$
Then the corresponding initial data is
\[
u_0(x)
=
\lambda_0^{-\frac{2}{p-1}}
\left(
\Phi_n+\sum_{j=1}^n a_j(0)\phi_{j,n}
\right)
\left(\frac{x-x_0}{\lambda_0}\right).
\]
By the smoothness and asymptotic behavior of the self-similar profile
\(\Phi_n\) and of the eigenfunctions \(\phi_{j,n}\), this initial data is smooth and has algebraic decay at infinity.
\subsection{Organization of the paper}
This paper is organized as follows. In Section \ref{section0}, we collect the preliminary results needed for the analysis, including the construction of the self-similar profiles and the spectral properties of the linearized operator around them. In Section \ref{section4}, we introduce the renormalized variables and formulate the corresponding dynamical system. We then close the bootstrap argument by means of the spectral gap estimate, derive the required modulation, energy, and $L^\infty$ bounds, and establish the Lipschitz dependence of both the unstable parameters and the blow-up time. These estimates complete the construction of the codimension-$n$ Lipschitz manifold and the proof of Theorem \ref{thm}.

\section{Preliminaries}\label{section0}
In this section, we give the preliminary results in \cite{Collot-Pierre-2019} and some notations which will be used in our later analysis.
We first recall the following existence result of the self-similar profiles of the homogeneous equation \eqref{0.1}, proved in \cite{Collot-Pierre-2019}.
\begin{proposition}[\cite{Collot-Pierre-2019}, Proposition 1.1]\label{self-similar profiles}
There exists $N$ such that for every integer $n>N$, there exists a smooth radial solution $\Phi_n$ to the  equation 
\begin{equation}\label{self-similar-equation}
   -\Delta \Phi+\Lambda\Phi-\Phi^p=0,\ \ \Lambda \Phi=\frac{2}{p-1}\Phi+y\cdot\nabla \Phi.
\end{equation}
Moreover, the radial function $\Lambda\Phi_n(r)$ has exactly $n$ zeros in
$(0,\infty)$.
There exists a sufficiently small constant $r_0>0$, independent of $n$, such
that the following asymptotic properties hold.
\begin{enumerate}
    \item Behavior away from the origin:
    \begin{equation}\nonumber
        \lim_{n \to +\infty} \sup_{r \geq r_0} \left( 1 + r^{\frac{2}{p-1}} \right) \left| \Phi_n(r) -\Phi_*(r) \right| = 0.
    \end{equation}
where $\Phi_*(r) =
    \left[
    \frac{2}{p-1}
    \left(d-2-\frac{2}{p-1}\right)
    \right]^{\frac{1}{p-1}}
    r^{-\frac{2}{p-1}}.$

    \item Behavior near the origin: there exists a sequence $\mu_n > 0$ with $\mu_n \to 0$ as $n \to +\infty$ such that
    \begin{equation}\nonumber
        \lim_{n \to +\infty} \sup_{r \leq r_0} \left| \Phi_n(r) - \frac{1}{\mu_n^{\frac{2}{p-1}}} Q \left( \frac{r}{\mu_n} \right) \right| = 0.
    \end{equation}
where $Q(r)$ is the unique radial solution to 
\begin{equation}\nonumber
\left\{
\begin{aligned}
&Q''+\frac{2}{r}Q'+Q^p=0,\\
&Q(0)=1,\ Q'(0)=0.
\end{aligned}
\right.
\end{equation}
Moreover, $
    Q(r)\sim \Phi_*(r)$ as $r\to+\infty.$
\end{enumerate}
\end{proposition}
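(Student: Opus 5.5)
The plan is to build $\Phi_n$ by a matched asymptotic (gluing) argument between an inner region $r\lesssim r_0$, where $\Phi$ is a rescaled ground state, and an outer region $r\gtrsim r_0$, where $\Phi$ is a perturbation of the singular profile $\Phi_*$. The quantization $\mu_n\to0$ comes from matching the oscillating phases in an intermediate annulus. Since $d=3$ lies below the Joseph--Lundgren dimension, for every $p>5$ the linearization of $\Delta Q+Q^p=0$ around $\Phi_*$ has indicial roots $-\gamma\pm i\omega$ with $\omega>0$, where
\[
\gamma=\tfrac{d-2}{2}-\tfrac{2}{p-1}.
\]
Consequently
\[
Q(r)-\Phi_*(r)=c\,r^{-\frac{2}{p-1}-\gamma}\sin(\omega\log r+\delta)+\text{lower order}, \qquad r\to\infty,
\]
with $c\neq0$. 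This oscillation is the source of both the countable family and the $n$ zeros of $\Lambda\Phi_n$.

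\emph{Inner region.} Set $Q_\mu(r)=\mu^{-\frac{2}{p-1}}Q(r/\mu)$. On $[0,r_0]$ we look for $\Phi=Q_\mu+\varepsilon$. The term $\Lambda\Phi$ is lower order there, because after rescaling it carries a factor $\mu^2$. Solving the radial ODE from the origin (a Volterra fixed point with $\Phi'(0)=0$), we obtain a one-parameter family indexed by $\mu$ with $|\varepsilon|\to0$ uniformly as $\mu\to0$. At $r=r_0$ this family has the expansion
\[
\Phi_*(r)+c\,\mu^{\gamma}r^{-\frac{2}{p-1}-\gamma}\sin\!\big(\omega\log(r/\mu)+\delta\big)+o(\mu^\gamma),
\]
together with the derivative of this expansion.

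\emph{Outer region.} On $[r_0,\infty)$ we look for $\Phi=\Phi_*+w$, where $w$ is small in the norm $\sup_r(1+r^{2/(p-1)})|w|$ and $\Phi$ has the regular decay $\Phi\sim c_\infty r^{-2/(p-1)}$ at infinity. Note that $\Phi_*$ is an exact singular solution of the full profile equation, since $\Lambda\Phi_*=0$. The linearized operator $-\Delta+\Lambda-p\Phi_*^{p-1}$ has, for each small amplitude $a$, a solution that is regular at infinity and behaves near $r_0$ like $a\,r^{-\frac{2}{p-1}-\gamma}\sin(\omega\log r+\theta_\infty)$. Here $\theta_\infty$ is a fixed phase coming from the behaviour at infinity. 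A Banach fixed point, using Duhamel from $r=\infty$, then yields a one-parameter family of outer nonlinear solutions parametrized by $a$.

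\emph{Matching.} We impose $C^1$ matching at $r=r_0$. This gives two equations for the two unknowns $(\mu,a)$:
\[
a=c\mu^\gamma\cdot(1+o(1)), \qquad \omega\log(1/\mu)+\delta\equiv\theta_\infty \pmod{\pi}\ \text{up to } o(1).
\]
The phase equation has a sequence of solutions $\log(1/\mu_n)\approx n\pi/\omega$, so $\mu_n\to0$. An implicit function or degree argument in $\mu$ near each approximate root, using the non-degeneracy coming from $\omega\neq0$, produces exact solutions.

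The main obstacle is this matching step. We need uniform control of the remainders in both regions, of size $o(\mu^\gamma)$, so that the transversal phase crossing survives. The convergence statements (1) and (2) then follow from the smallness of $w$ and $\varepsilon$.

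\emph{Zero count.} For the counting statement, we use $\Lambda Q_\mu(r)=\mu^{-\frac{2}{p-1}}(\Lambda Q)(r/\mu)$ in the inner region. Here $\Lambda Q$ oscillates with zeros at $\log r\approx \mathrm{const}+k\pi/\omega$, and $\Lambda\Phi_*=0$. Counting these zeros on $[0,r_0/\mu_n]$ and showing that the outer part contributes no extra zeros gives exactly $n$ zeros, after shifting the index of $n$ by a constant if needed. This indexing is why the result holds only for $n>N$.
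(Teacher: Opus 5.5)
Your plan follows the same route as the construction the paper quotes without proof from Collot--Rapha\"el--Szeftel: an interior perturbation of $\mu^{-\frac{2}{p-1}}Q(r/\mu)$, an exterior perturbation of $\Phi_*$ that is regular at infinity, both obtained by fixed-point arguments, with $\mu_n$ quantized by matching the log-periodic phase. Two minor corrections leave the argument intact: at fixed $r_0$ the $\Lambda$-term makes the inner solution deviate from your displayed expansion by $O(r_0^2\mu^\gamma)$ rather than $o(\mu^\gamma)$, which you handle by matching in a basis of solutions of $-\Delta+\Lambda-p\Phi_*^{p-1}$ (or by accepting an $O(r_0^2)$ phase error), and the exterior region does contribute zeros of $\Lambda\Phi_n$, but only a fixed, $n$-independent number that your index shift absorbs.
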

By Proposition \ref{self-similar profiles}, the profiles \(\Phi_n\) are smooth
radially symmetric functions satisfying the asymptotic decay
\begin{equation}\label{large-r}
 \Phi_n(r)\sim r^{-\frac{2}{p-1}}
\quad \text{as } r\to\infty.   
\end{equation}

We define the linearized operator corresponding to \eqref{self-similar-equation} around $\Phi_n$ by
\begin{equation}\label{linear}
L_n=-\Delta+\Lambda-p\Phi_n^{p-1},
\end{equation}
where the operator $\Lambda$ defined in \eqref{self-similar-equation}.
We define the scalar product
\begin{equation}\label{product}
(f,g)_\rho=\int_{\mathbb{R}^3}f(x)g(x)\rho(x)dx,\ \  \rho(x)=e^{-\frac{|x|^2}{2}},    
\end{equation}
and let $L^2_\rho$ be the weighted Sobolev space. We define $H^k_{\rho}(\mathbb{R}^3)$ by
$$
||u||_{H^k_\rho}=\sqrt{\sum_{i=0}^k||D^iu||^2_{L^2_\rho}}.
$$
By direct computation, we know that $L_n$ is a self-adjoint operator on the space $L^2_\rho$. In the following proposition, we recall some spectral properties of the operator $L_n$ which were presented in \cite{Collot-Pierre-2019}.

\begin{proposition}
[\cite{Collot-Pierre-2019}, Proposition 3.1]\label{prop:spectral-gap}
Let $ n > N\gg 1 $, then the following holds:

\begin{enumerate}
    \item \text{Eigenvalues and eigenfunctions.} The spectrum of $L_n$ is given by
    \begin{equation}\nonumber
        -\mu_{n+1,n} < \cdots < -\mu_{2,n} < -\mu_{1,n} = -2 < -\mu_{-1,n} = -1 < 0 < \lambda_{0,n} < \lambda_{1,n} < \cdots
    \end{equation}
    with
    \begin{equation}\nonumber
        \lambda_{j,n} > 0 \text{ for all } j \geq 0 \text{ and } \lim_{j \to +\infty} \lambda_{j,n} = +\infty.
    \end{equation}
The eigenvalues $(-\mu_{j,n})_{1 \leq j \leq n+1}$ are simple and associated to spherically symmetric bounded eigenvectors $\psi_{j,n}$ with
 \[
 \| \psi_{j,n} \|_{L^2_\rho} = 1, \quad \psi_{1,n} = \frac{\Lambda \Phi_n}{\|\Lambda \Phi_n\|_\rho},
 \]
and the eigenspace for $\mu_{-1,n}$ is spanned by
 \begin{equation}\nonumber
\psi_{-1,n}^k = \frac{\partial_k \Phi_n}{\|\partial_k \Phi_n\|_\rho}, \quad 1 \leq k \leq 3.
\end{equation}
 Moreover, there holds as $ r \to +\infty $
    \begin{equation}\nonumber
        |\partial_k \psi_{j,n}(r)| \lesssim (1 + r)^{-\frac{2}{p-1}-\mu_{j,n}-k}, \quad 1 \leq j \leq n+1, \quad k \geq 0.
    \end{equation}
\item \text{Spectral gap.} There holds for some constant $ c_n > 0 $:
    \begin{equation}\label{spectral-gap}
        \forall \varepsilon \in H^1_\rho, \, (L_n \varepsilon, \varepsilon)_\rho \geq c_n \| \varepsilon \|^2_{H^1_\rho} - \frac{1}{c_n} \left[ \sum_{j=1}^{n+1} (\varepsilon, \psi_{j,n})_\rho^2 + \sum_{k=1}^3 (\varepsilon, \psi_{-1,n}^k)^2_\rho \right].
    \end{equation}
\end{enumerate}
\end{proposition}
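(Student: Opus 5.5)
The plan is to view $L_n$ as a compact perturbation of the Ornstein--Uhlenbeck operator $-\Delta+y\cdot\nabla+\frac{2}{p-1}$ in $L^2_\rho$, in four steps.

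\textbf{Step 1: discreteness of the spectrum.} The potential $p\Phi_n^{p-1}$ is smooth and bounded. By Proposition \ref{self-similar profiles} it decays like $r^{-2}$ at infinity. Since $H^1_\rho$ embeds compactly into $L^2_\rho$ (the Gaussian weight gives $\|y\varepsilon\|_{L^2_\rho}\lesssim\|\varepsilon\|_{H^1_\rho}$), the quadratic form $(L_n\varepsilon,\varepsilon)_\rho$ is bounded below and closed on $H^1_\rho$. Hence $L_n$ is self-adjoint with compact resolvent, and its spectrum is a sequence of eigenvalues of finite multiplicity tending to $+\infty$.

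\textbf{Step 2: the explicit eigenfunctions.} These come from the symmetries of the homogeneous equation.
\begin{itemize}
\item \emph{Translations.} Differentiating \eqref{self-similar-equation} in $y_k$ and using $\partial_k\Lambda=\Lambda\partial_k+\partial_k$ gives $L_n\partial_k\Phi_n=-\partial_k\Phi_n$.
\item \emph{Scaling.} Using $[\Delta,y\cdot\nabla]=2\Delta$ and $\Lambda(\Phi^p)=p\Phi^{p-1}\Lambda\Phi+2\Phi^p$ (since $\frac{2p}{p-1}-\frac{2}{p-1}=2$), applying $\Lambda$ to the profile equation yields $L_n\Lambda\Phi_n=-2\Lambda\Phi_n$.
\end{itemize}

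\textbf{Step 3: counting negative eigenvalues.} I would decompose into spherical harmonics, so that $L_n=\bigoplus_\ell L_n^{(\ell)}$ with $L_n^{(\ell)}=L_n^{(0)}+\ell(\ell+1)r^{-2}$, each $L_n^{(\ell)}$ being a Sturm--Liouville operator on $(0,\infty)$ with weight $r^2\rho$.
\begin{itemize}
\item \emph{Radial sector.} Sturm oscillation applies: $\Lambda\Phi_n$ has exactly $n$ zeros, so $-2$ is the $(n+1)$-st radial eigenvalue. This gives exactly $n$ simple radial eigenvalues $-\mu_{n+1,n}<\dots<-\mu_{2,n}$ below $-2$. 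One must still show there is no radial eigenvalue in $(-2,0]$, i.e.\ that the $(n+2)$-nd radial eigenvalue is positive.
\item \emph{Sector $\ell=1$.} Here $\Phi_n'$ is the radial part of $\partial_k\Phi_n$. If $\Phi_n'$ has no zero on $(0,\infty)$, it is the ground state, and $-1$ is the lowest eigenvalue of $L_n^{(1)}$. One must show the next one is positive.
\item \emph{Sectors $\ell\ge2$.} The aim is $L_n^{(\ell)}\ge L_n^{(1)}+4r^{-2}>0$ on the orthogonal complement of the ground state.
\end{itemize}
All three positivity claims, and the monotonicity of $\Phi_n$, I would prove by the matched asymptotics behind Proposition \ref{self-similar profiles}, letting $n\to\infty$:
\begin{itemize}
\item \emph{For $r\ge r_0$.} $L_n$ converges to the linearization around the singular profile $\Phi_*$. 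Its eigenfunctions are explicit confluent hypergeometric (Kummer) functions, and its spectrum is explicitly computable.
\item \emph{For $r\le r_0$.} After rescaling by $\mu_n$, $L_n$ converges to $-\Delta-pQ^{p-1}$. For $p>5$ in $d=3$ this operator has an inverse-square tail with supercritical constant $pc_*>\frac14$ at infinity. This explains the oscillation count: exactly $n$ oscillations are captured on $[0,r_0]$.
\end{itemize}
Matching the Wronskians of the inner and outer solutions at $r\sim r_0$ localizes every eigenvalue in $(-2,\lambda_0]$ near an eigenvalue of the limiting outer problem. This shows that none lies in $(-2,0]$ apart from $-1$, in every sector. \textbf{This spectral localization, uniform in $n$, is the main obstacle.} I would first attempt it in the radial and $\ell=1$ sectors and then treat $\ell\ge2$ by the Hardy-type comparison above.

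\textbf{Step 4: decay and the spectral gap.} The eigenvalues found in Step 3 are exactly those listed in part (1) of Proposition \ref{prop:spectral-gap}.
\begin{itemize}
\item \emph{Decay.} This follows from ODE asymptotics of $-\psi''-\frac2r\psi'+r\psi'+\frac{2}{p-1}\psi+O(r^{-2})\psi=-\mu\psi$. Of the two fundamental behaviours, the Gaussian-growing one is excluded by $\psi\in L^2_\rho$. The remaining one is a power law with exponent dictated by the indicial equation, and derivatives gain $r^{-1}$ each.
\item \emph{Spectral gap.} First, discreteness gives $(L_n\varepsilon,\varepsilon)_\rho\ge\lambda_{0,n}\|\varepsilon\|_{L^2_\rho}^2$ on the orthogonal complement of the $n+4$ eigenfunctions $\psi_{j,n}$ and $\psi^k_{-1,n}$. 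Next, the form satisfies $(L_n\varepsilon,\varepsilon)_\rho\ge\|\nabla\varepsilon\|^2_{L^2_\rho}-C_n\|\varepsilon\|^2_{L^2_\rho}$, since $\Phi_n^{p-1}$ is bounded for fixed $n$. Interpolating between these two bounds upgrades the first to an $H^1_\rho$ bound.
\item \emph{General $\varepsilon$.} Decompose $\varepsilon$ into its component on the span of the $n+4$ eigenfunctions and the orthogonal remainder. This produces \eqref{spectral-gap} with the projection terms on the right, for a small constant $c_n>0$.
\end{itemize}
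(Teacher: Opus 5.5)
\textbf{Overall.} The paper gives no proof of this proposition. It is quoted from Collot--Rapha\"el--Szeftel (Proposition 3.1 there), where it comes out of a bifurcation, i.e.\ inner/outer matching, analysis from the singular profile $\Phi_*$ as $n\to\infty$. So your architecture is the right one, and Steps 1, 2 and 4 are essentially sound. In Step 2, fix the sign: $\Lambda(\Phi^p)=p\Phi^{p-1}\Lambda\Phi-2\Phi^p$. With this sign you get $L_n\Lambda\Phi_n=-2(\Delta\Phi_n+\Phi_n^p)=-2\Lambda\Phi_n$; with yours the computation does not close.

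\textbf{The gap in Step 3.} Everything in part (1) beyond Sturm counting sits in the step you leave open, and your description of that step is inaccurate. The limiting outer operator $-\Delta+\Lambda-pc_\infty|y|^{-2}$, with $c_\infty=\frac{2}{p-1}(1-\frac{2}{p-1})$, has no intrinsic spectrum in the sectors that matter. Since $pc_\infty>\frac14$, the radial sector is oscillatory (limit circle) at $0$. For $\ell=1$, both branches $r^{\frac{2}{p-1}}$ and $r^{-1-\frac{2}{p-1}}$ are square integrable near $0$ because $p>5$. The boundary condition at $0$ must therefore be imported from the inner zero modes.
\begin{itemize}
\item For $\ell=1$: $Q'$ is purely the decaying branch. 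It selects $r^{-1-\frac{2}{p-1}}$ and gives limiting eigenvalues $-1+2k$. The Friedrichs choice would instead give $\frac{4}{p-1}+2k$ and lose $-1$ altogether.
\item Radially: $\Lambda Q$ imports the phase $-\omega\log\mu_n$ (mod $\pi$, up to a constant), where $\omega=\sqrt{pc_\infty-\frac14}$. Since $L_n\Lambda\Phi_n=-2\Lambda\Phi_n$ for every $n$, this phase must converge to the one for which $-2$ is a limiting eigenvalue.
\end{itemize}
Write the $L^2_\rho$ solution as $r^{\gamma}U(a,\gamma+\frac32,\frac{r^2}{2})$, with $\gamma=-\frac12+i\omega$ and $a(\lambda)=\frac12(\frac{2}{p-1}-\frac12-\lambda+i\omega)$. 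Its phase at $0$ is $\arg\Gamma(a(\lambda))$ plus a $\lambda$-independent constant. So the limiting eigenvalues are the roots of $\arg\Gamma(a(\lambda))\equiv\arg\Gamma(a(-2))\pmod{\pi}$.

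The fact you actually need has two parts. First, this function of $\lambda$ is strictly decreasing: its derivative is $-\frac12$ times the imaginary part of the digamma function at $a(\lambda)$, which is positive in the upper half-plane. Second, by $\Gamma(z+1)=z\Gamma(z)$, it drops by exactly $\arg a(0)\in(0,\pi)$ from $\lambda=-2$ to $\lambda=0$. Hence it has no other root in $(-2,0]$. Without this computation, and without uniform-in-$n$ control of the matching, nothing in your outline excludes an extra radial eigenvalue in $(-2,0]$.

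\textbf{The $\ell\ge2$ sectors.} Your treatment would fail as written. The inequality $L_n^{(\ell)}\ge L_n^{(1)}+4r^{-2}$ is correct but gives nothing by itself, because $L_n^{(1)}$ has the eigenvalue $-1$ on $\Phi_n'$. The radial profile of an $\ell\ge2$ function is not orthogonal to $\Phi_n'$, so restricting to the complement of the ground state is beside the point. Showing $L_n^{(1)}+4r^{-2}>0$ is a new spectral statement, not a Hardy-type consequence.

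The comparison trick does work for the inner operator $H=-\Delta-pQ^{p-1}$. Since $Q'<0$ solves $H^{(1)}Q'=0$, one has $H^{(1)}\ge0$, hence $H^{(\ell)}\ge(\ell(\ell+1)-2)r^{-2}>0$ for $\ell\ge2$. This strict positivity forces the regular inner solution in the sector $\ell=2$ to carry a nonzero multiple of the growing branch $r^{\gamma_2^+}$, with $\gamma_2^+=-\frac12+\sqrt{\frac{25}{4}-pc_\infty}$. Otherwise it would be an $L^2$ zero mode. After rescaling by $\mu_n$ this branch dominates, so the matching for $\ell=2$ gives limiting eigenvalues $\gamma_2^++\frac{2}{p-1}+2k>0$. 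The sectors $\ell\ge3$ then follow from the monotonicity $L_n^{(\ell+1)}>L_n^{(\ell)}$.
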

\begin{remark}
From the above proposition, we know that the operator $L_n$ admits \(n+4\) unstable directions: \(n+1\)
spherically symmetric modes \(\psi_{j,n}\) ($1\le j\le n+1$), and three
translation modes \(\partial_k\Phi_n\) $(1\le k\le 3)$.   

In the homogeneous case ($f\equiv 0$), the mode
$
\psi_{1,n}=\frac{\Lambda\Phi_n}{||\Lambda\Phi_n||_{\rho}}
$
arises from varying the blow-up scale, or equivalently the blow-up time, while the modes
$
\partial_k\Phi_n$ ($ 1\le k\le 3),$
arise from varying the blow-up point. In the present inhomogeneous problem \eqref{1.1}, these directions are no longer generated by exact symmetries, since the source term $f(x)$ breaks both the scaling and translation invariances. Nevertheless, they are still controlled through the modulation parameters $\lambda(s)$ and $x(s)$ in the next section. Hence they are not eliminated by imposing codimension conditions on the initial data. The only unstable directions that have to be removed in this way are the remaining $n$ modes
$\psi_{2,n},\ldots,\psi_{n+1,n}$,
which explains why the set constructed in Theorem~\ref{thm} has codimension $n$.
\end{remark}

\textbf{Notations}. In the rest of this paper, we restrict ourselves to the case $d=3$ and $p>5$. Let $B_R$ denote the ball of radius $R>0$ centered at the origin in $\mathbb{R}^3$.
We use the notation $a\lesssim b$ if there exists an independent constant $C$ such that $a\le Cb$. The notation $C(\cdot)$ denotes a positive
constant depending only on “·”, and its value may change from line to line.

\section{Dynamical control of the flow}\label{section4}

In the rest of this paper, $n$ is fixed ($n\gg1$). For simplicity, we omit the $n$ subscript and write $\psi_j$, $\mu_j$ and $\lambda_j$ instead.
\subsection{Renormalisation}

We define the $L^\infty$ tube around the renormalized versions of $\Phi_n$:
$$
X_\delta=\bigg\{u=\frac{1}{\mu^{\frac{2}{p-1}}}(\Phi_n+\bar{u})\bigg(\frac{x-x'}{\mu}\bigg),\  \mu>0,\ ||\bar{u}||_{L^\infty}<\delta\bigg\}.
$$
The following geometrical decomposition lemma is essentially contained in 
\cite{Collot-Pierre-2019}.  We provide a slightly more detailed proof adapted to the present setting.
\begin{lemma}[Geometrical decomposition]\label{Implicit}
 There exist  $\delta>0$  and $ C>0$  such that any  $u \in X_{\delta}$ admits a unique decomposition
$$u=\frac{1}{\lambda^{\frac{2}{p-1}}}\left(\Phi_n+\sum_{j=2}^{n+1}a_j\psi_j+\varepsilon\right)\bigg(\frac{x-\bar{x}}{\lambda}\bigg),$$
where  $\varepsilon$  satisfies the orthogonality conditions
$$ \left(\varepsilon,\psi_j\right)_{\rho}=(\varepsilon,\partial_k\Phi_n)_\rho=0,\ \ 1\le j\le n+1,\ \ 1\le k\le 3.$$
Moreover, 
the maps
\[
(\bar u,\mu,x')\mapsto \lambda,\qquad
(\bar u,\mu,x')\mapsto \bar x,\qquad
(\bar u,\mu,x')\mapsto a_j,\quad 2\le j\le n+1,
\]
are smooth. In addition, the following estimate holds: 
$$
||\varepsilon||_{L^\infty}+\left|
\frac{\lambda}{\mu}-1\right|+\sum_{j=2}^{n+1}|a_j|+\frac{|\bar{x}-x'|}{\mu}\le C\delta.$$
\end{lemma}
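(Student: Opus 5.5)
We reduce to a neighbourhood of the reference profile by scaling and translation, and then apply the implicit function theorem to the $n+4$ orthogonality conditions, with unknowns $(\lambda,\bar x,a_2,\dots,a_{n+1})$.

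\textbf{Reduction and choice of unknowns.} Given $u\in X_\delta$ with representation $(\bar u,\mu,x')$, we look for the decomposition in the form $\lambda=\mu e^{\tau}$ and $\bar x=x'+\mu z$, with $(\tau,z)\in\mathbb R\times\mathbb R^3$ small. Changing variables $y=(x-x')/\mu$, the identity for $u$ becomes
\[
\varepsilon(y)=e^{\frac{2\tau}{p-1}}(\Phi_n+\bar u)\big(e^{\tau}y+z\big)-\Phi_n(y)-\sum_{j=2}^{n+1}a_j\psi_j(y).
\]
Thus $\varepsilon$ is an explicit function of $(\bar u,\tau,z,a)$, and $\mu,x'$ disappear from the problem. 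We then define $F:(\bar u,\tau,z,a)\mapsto\mathbb R^{n+4}$ with components $(\varepsilon,\psi_j)_\rho$ for $1\le j\le n+1$ and $(\varepsilon,\partial_k\Phi_n)_\rho$ for $1\le k\le3$. Since $\bar u$ enters $F$ only through a weighted integral, we rewrite that term as $\int \bar u(w)\,G(w)\,dw$ by the change of variables $w=e^\tau y+z$, so that all $(\tau,z)$-dependence sits on the smooth, Gaussian-weighted test functions $\rho\psi_j$ and $\rho\partial_k\Phi_n$. This makes $F$ smooth in $(\tau,z,a)$ and affine and bounded in $\bar u\in L^\infty$, with $F(0,0,0,0)=0$.

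\textbf{Jacobian at the origin.} Differentiating at $\bar u=0$, $(\tau,z,a)=0$ gives $\partial_\tau\varepsilon=\Lambda\Phi_n$, $\partial_{z_k}\varepsilon=\partial_k\Phi_n$, and $\partial_{a_j}\varepsilon=-\psi_j$. By Proposition \ref{prop:spectral-gap}, $\Lambda\Phi_n$ is proportional to $\psi_1$. The $\partial_k\Phi_n$ lie in the eigenspace for $-1$, and $\psi_1,\dots,\psi_{n+1}$ are eigenfunctions for distinct eigenvalues $-\mu_j\neq-1$. Since $L_n$ is self-adjoint in $L^2_\rho$, all these functions are mutually $\rho$-orthogonal. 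Moreover, by radial symmetry of $\Phi_n$ and $\rho$, $(\partial_k\Phi_n,\partial_l\Phi_n)_\rho=0$ for $k\ne l$. Hence the Jacobian is block diagonal. Its entries are
\[
\|\Lambda\Phi_n\|_\rho \text{ in the }(\psi_1,\tau)\text{ slot},\qquad -1 \text{ in the }(\psi_j,a_j)\text{ slots},\qquad \|\partial_k\Phi_n\|_\rho^2 \text{ in the }(\partial_k\Phi_n,z_k)\text{ slots},
\]
so it is invertible.

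\textbf{Conclusion.} The implicit function theorem on the Banach space $L^\infty\times\mathbb R^{n+4}$ gives, for $\|\bar u\|_{L^\infty}<\delta$, a unique smooth solution $(\tau,z,a)(\bar u)$ with $|\tau|+|z|+|a|\le C\|\bar u\|_{L^\infty}\le C\delta$. We then set $\lambda=\mu e^\tau$ and $\bar x=x'+\mu z$. These depend smoothly on $(\bar u,\mu,x')$, and they satisfy $|\lambda/\mu-1|+|\bar x-x'|/\mu\lesssim\delta$. The $L^\infty$ bound on $\varepsilon$ follows from the explicit formula. We use $\|\bar u\|_{L^\infty}<\delta$ and the boundedness of the $\psi_j$, and we need
\[
\|e^{\frac{2\tau}{p-1}}\Phi_n(e^\tau\cdot+z)-\Phi_n\|_{L^\infty}\lesssim|\tau|+|z|.
\]
This last bound holds because $\Lambda\Phi_n$ and $\nabla\Phi_n$ are bounded, by the decay of $\Phi_n$ in \eqref{large-r} and its smoothness.

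\textbf{Main obstacle.} The delicate point is uniqueness. The implicit function theorem only gives uniqueness among small $(\tau,z,a)$, while $u$ may admit several representations $(\bar u,\mu,x')$. We handle this by noting that any two representations of the same $u$ in $X_\delta$ have comparable $\mu$ and nearby $x'$, again controlled by $\delta$. Uniqueness is then understood within the neighbourhood $|\lambda/\mu-1|+|\bar x-x'|/\mu+|a|\le C\delta$, as in \cite{Collot-Pierre-2019}.
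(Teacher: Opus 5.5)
Your proposal is correct and follows essentially the same route as the paper: you apply the implicit function theorem to the $n+4$ orthogonality conditions in the unknowns (scale, shift, $a_2,\dots,a_{n+1}$), the Jacobian at the reference point is diagonal and hence invertible because $\Lambda\Phi_n$, $\partial_k\Phi_n$ and the $\psi_j$ are mutually $L^2_\rho$-orthogonal, and you then set $\lambda=\mu\bar\lambda$ (your $\mu e^{\tau}$) and $\bar x=x'+\mu\tilde x$ (your $x'+\mu z$). Your change of variables moving the $(\tau,z)$-dependence onto the Gaussian-weighted test functions, your explicit $L^\infty$ bound on $\varepsilon$, and your remark that uniqueness is only local supply details that the paper merely asserts.
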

\begin{proof}
We define the map:
\begin{equation}\label{map}
   F(\bar{u}, \mu, x, b_2,\cdots, b_{n+1})=\mu^{\frac{2}{p-1}}(\Phi_n+\bar{u})(\mu y+x)-\Phi_n(y)-\sum_{j=2}^{n+1}b_j\psi_j(y). 
\end{equation}
Then we define the vector-valued function$$G(\bar{u}, \mu, x, b_2,\cdots, b_{n+1}):=((F,\Lambda \Phi_n)_\rho,(F,\partial_1 \Phi_n)_\rho,(F,\partial_2 \Phi_n)_\rho,(F,\partial_3 \Phi_n)_\rho,(F,\psi_2)_\rho,\cdots,(F,\psi_{n+1})_\rho).$$
We immediately check that  $G(0,1,0\cdots,0)=(0,0\cdots,0)$. Since the operator $L_n$  is  self-adjoint for the $L^2_{\rho}$ product, there holds \begin{equation}\label{nonbus}
(\psi_j,\psi_k)_\rho=\begin{cases}1,& \text { for } j=k, \\ 0,& \text { for } j\neq k.\end{cases}
\end{equation} We thus know that the  Jacobian matrix
$$
\left.\frac{\partial G}{\partial (\mu,x,b_2,\cdots, b_{n+1})}\right|_{(0,1,0,\cdots,0)}
=\left(\begin{array}{cccc}
||\Lambda\Phi_n||^2_{L^2_\rho} & 0 & \cdots & 0 \vspace{1ex}\\
0 & ||\partial_1\Phi_n||^2_{L^2_\rho} & \cdots & 0 \\
\cdots & & & \\
0 &0 & \cdots & -1
\end{array}\right)
$$
is invertible. Although the map $F$ is not regarded as a smooth $L^\infty$-valued map, the map $G$ is smooth as a finite-dimensional map. 
By the implicit function theorem, there exist $\delta>0$ and unique smooth
maps
\begin{equation}\label{Smooth-map}
\bar u\mapsto
\bar\lambda(\bar u),\qquad
\bar u\mapsto
\widetilde x(\bar u),\qquad
\bar u\mapsto
a_j(\bar u),\quad 2\le j\le n+1,
\end{equation}
defined for $\|\bar u\|_{L^\infty}<\delta$, such that
\[
G(\bar u,\bar\lambda(\bar u),\widetilde x(\bar u),
a_2(\bar u),\ldots,a_{n+1}(\bar u))=0.
\]
Equivalently,
\begin{equation}\label{varepsilon}
\varepsilon(y)=F(\bar{u}, \bar{\lambda},\tilde{x}, a_2,\cdots,a_{n+1})\end{equation} satisfying
\begin{equation}\label{varepsilon1}\left(\varepsilon,\psi_j\right)_{\rho}=(\varepsilon,\partial_k\Phi_n)_{\rho}=0,\ 1\le j\le n+1,\ 1\le k\le 3.
\end{equation}
We know from \eqref{map} and \eqref{varepsilon} that
\begin{equation}\label{dec}
(\Phi_n+\bar{u})(\bar{\lambda} y+\tilde{x})=\frac{1}{\bar{\lambda}^{\frac{2}{p-1}}}(\Phi_n+\varepsilon+\sum_{j=2}^{n+1}a_j\psi_j)(y).
\end{equation}
For $u\in X_\delta$, combining \eqref{dec}, we have
$$
u=\frac{1}{\mu^{\frac{2}{p-1}}}(\Phi_n+\bar{u})\bigg(\frac{x-x'}{\mu}\bigg)=\frac{1}{\mu^{\frac{2}{p-1}}}\left[\frac{1}{\bar{\lambda}^{\frac{2}{p-1}}}(\Phi_n+\varepsilon+\sum_{j=2}^{n+1}a_j\psi_j)\right]\left(\frac{x-x'-\mu \tilde{x}}{\bar{\lambda} \mu}\right).
$$
Let $\lambda=\mu\bar{\lambda}$ and $\bar{x}=x'+\mu \tilde{x}$, then for any $u\in X_\delta$, the above decomposition can be written as 
\begin{equation}\label{varepsilon2}
u=\frac{1}{\lambda^{\frac{2}{p-1}}}(\Phi_n+\varepsilon+\sum_{j=2}^{n+1}a_j\psi_j)\left(\frac{x-\bar{x}}{\lambda}\right).
\end{equation}
In addition, from \eqref{Smooth-map} and $\|\bar u\|_{L^\infty}<\delta$, there exists a positive constant $C$ such that 
\begin{equation}\label{bound}
||\varepsilon||_{L^\infty}+\left|
\frac{\lambda}{\mu}-1\right|+\sum_{j=2}^{n+1}|a_j|+\frac{|\bar{x}-x'|}{\mu}\le C\delta.
\end{equation}
Combining \eqref{Smooth-map}, \eqref{varepsilon1}, \eqref{varepsilon2} and \eqref{bound}, we conclude the proof.
\end{proof}

\subsection{Choice of the initial data}
We next choose the initial data of \eqref{1.1} with the following form:
\begin{equation}\label{initial-data}
    u_0(x)=\frac{1}{\lambda_0^{\frac{2}{p-1}}}\left(\Phi_n+\varepsilon_0+\sum_{j=2}^{n+1}a_j\psi_j\right)\left(\frac{x-x_0}{\lambda_0}\right),
\end{equation}
where $x_0\in\mathbb{R}^3$ and
\begin{equation}\label{initial-setting1}
\left(\varepsilon_0,\psi_j\right)_{\rho}=(\varepsilon_0,\partial_k\Phi_n)_\rho=0,\ \ 1\le j\le n+1,\ \ 1\le k\le 3.
\end{equation}
Take $s_0\gg1$, $\mu$, $K_0$ three constants to be fixed later on, we restrict that the above parameters satisfying
\begin{equation}\label{initial-setting}
\lambda_0=\lambda(s_0)=e^{-s_0},\ \left\|\varepsilon_0\right\|_{L_\rho^2} < K_0 e^{-\mu s_0},\ \left\|\varepsilon_0\right\|_{L^\infty} < K_0 e^{-\mu s_0}.
\end{equation} and
\begin{equation}\label{initial-unstable-model}
    \sum_{j=2}^{n+1}|a_j|^2\le e^{-2\mu s_0}.
\end{equation}
%
%
%
%

\subsection{Bootstrap for the renormalized flow.}
As long as the solution $u(t)$ of \eqref{1.1} starting from \eqref{initial-data} belongs to $X_\delta$, we apply  Lemma \ref{Implicit} to deduce that the solution $u(t)$ can be written as
\begin{equation}\label{re}
u(t, x)=\frac{1}{\lambda(t)^\frac{2}{p-1}}\left(\Phi_n+v\right)(s, y),
\end{equation}
where $(s,y)$ denote the rescaled time and space variable, which are defined by
\begin{equation}\label{renormalized time}
s(t):=\int_0^t\frac{d\tau}{\lambda^2(\tau)}+s_0,\ \ y=\frac{x-x(t)}{\lambda(t)},
\end{equation}
and\begin{equation}\label{rts}
v=\varepsilon+\psi,\ \ \psi=\sum_{j=2}^{n+1}a_j(s)\psi_j,
\end{equation}
and
$\varepsilon$ is a function satisfying
\begin{equation}\label{orthogonality}
\left(\varepsilon,\psi_j\right)_{\rho}=(\varepsilon,\partial_k\Phi_n)_{\rho}=0,\ \ 1\le j\le n+1,\ \ 1\le k\le 3,
\end{equation}

From parabolic regularizing effects, the above decomposition is differentiable with respect to time.
Injecting \eqref{re} into \eqref{1.1} yields the renormalized equation
\begin{equation}\label{renormalized equation}
\partial_{s} \varepsilon+L_n \varepsilon=F+\operatorname{Mod},
\end{equation}
where
$$
\text{Mod}=\sum_{j=2}^{n+1}[\mu_ja_j-(a_j)_s]\psi_j+\left(\frac{\lambda_s}{\lambda}+1\right) \left(\Lambda\Phi_n+\Lambda\psi\right)+\frac{x_s}{\lambda}\cdot(\nabla\Phi_n+\nabla\psi),
$$
and  $F=\tilde{L}(\varepsilon)+NL$,
\begin{equation}\label{force}
\tilde{L}(\varepsilon)=\left(\frac{\lambda_s}{\lambda}+1\right) \Lambda\varepsilon+\frac{x_s}{\lambda}\cdot\nabla\varepsilon,\ \ NL=\lambda^{\frac{2p}{p-1}}f(\lambda y+x(t))+|\Phi_n+v|^{p-1}(\Phi_n+v)-\Phi_n^p-p\Phi_n^{p-1}v.
\end{equation}

We claim the following bootstrap proposition. Firstly,
we specify the order in which the parameters are chosen. Fix
$n>N$ as in Proposition 2.1. All constants below may depend on $n$ and $p$.
Let $c_n>0$ be the spectral gap constant in \eqref{spectral-gap}. We choose
\[
0<\mu<\frac{c_n}{4}
\]
sufficiently small. Then we choose
\[
0<K_0\ll1,\qquad K\gg K_0,\qquad K'\gg C(K,K_0),
\]
where $C(K,K_0)$ is the constant appearing in the Lemma \ref{Linfty}.
Finally, we choose
\[
s_0\ge s_*(n,p,\mu,K_0,K,K',\|f\|_{L^\infty})
\]
sufficiently large. Equivalently, $\lambda_0=e^{-s_0}$ is chosen sufficiently
small.
No smallness assumption is imposed on $\|f\|_{L^\infty}$ itself.

\begin{proposition}\label{bootstrap}
There exist constants $0<\mu,K_0\ll1$,  $K\gg1$ and $K'\gg1$ chosen as above such that the following holds.
For every $s_0\ge s_*(n,p, \mu, K_0,K,K',\|f\|_{L^\infty})\gg1$ sufficiently large, the following holds: Let $u_0$ satisfy \eqref{initial-data}, \eqref{initial-setting1} and \eqref{initial-setting}, then there exists $(a_{2},\cdots ,a_{n+1})$ satisfying \eqref{initial-unstable-model} such that the solution starting from $u_0$ can be decomposed according to \eqref{re} for all $s\ge s_0$:\\
- control of the scaling:
\begin{equation}\label{1.7}
0<\lambda(s)<e^{-\mu s};
\end{equation}
- control of the unstable modes:
\begin{equation}\label{chca}
\sum_{j=2}^{n+1}\left|a_j\right|^2 \le e^{-2\mu s};
\end{equation}
- control of the weighted norm:
\begin{equation}\label{1.9}
\left\|\varepsilon\right\|_{L_\rho^2} < K e^{-\mu s};
\end{equation}
- control of the maximum norm:
\begin{equation}\label{2.0}
\left\|\varepsilon\right\|_{L^\infty} < K' e^{-\mu s};
\end{equation}
\end{proposition}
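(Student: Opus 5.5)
We argue by the classical bootstrap combined with a topological (Brouwer-type) argument on the unstable modes, following \cite{Collot-Pierre-2019}. Fix $u_0$ as in \eqref{initial-data}--\eqref{initial-setting} and, for each choice of $(a_2,\dots,a_{n+1})$ in the ball \eqref{initial-unstable-model}, define the exit time $s^*\in[s_0,\infty]$ as the supremum of times for which the decomposition \eqref{re} exists and \eqref{1.7}--\eqref{2.0} hold on $[s_0,s^*)$. Since $K\gg K_0$ and $K'\gg K_0$, the bounds \eqref{1.9}, \eqref{2.0} hold strictly at $s_0$. By continuity of the flow, $s^*>s_0$. The goal is to show that for some choice of $(a_j(s_0))$ one has $s^*=\infty$. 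We do this in three steps. First, we improve \eqref{1.7}, \eqref{1.9} and \eqref{2.0} on $[s_0,s^*)$ so that they cannot be the reason for exit. Second, we show that the unstable modes are outgoing at the exit time. Third, we conclude by a continuity/Brouwer argument.

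\textbf{Modulation and forcing estimates.} We take the $L^2_\rho$ products of \eqref{renormalized equation} with $\Lambda\Phi_n$, $\partial_k\Phi_n$ and $\psi_j$, and use the orthogonality \eqref{orthogonality} together with \eqref{nonbus}. This gives
\[
\Big|\frac{\lambda_s}{\lambda}+1\Big|+\Big|\frac{x_s}{\lambda}\Big|+\sum_{j=2}^{n+1}|(a_j)_s-\mu_ja_j|\lesssim \|\varepsilon\|_{L^2_\rho}^2+\sum|a_j|^2+\lambda^{\frac{2p}{p-1}}\|f\|_{L^\infty}.
\]
The linear term $\tilde L(\varepsilon)$ contributes only quadratically after integration by parts, since $\Lambda^*$ and $\nabla^*$ act on smooth Gaussian-weighted test functions. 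The new ingredient is the forcing term $\lambda^{2p/(p-1)}f$. By \eqref{1.7} it is bounded by $\|f\|_{L^\infty}e^{-\frac{2p}{p-1}\mu s}$, and $\frac{2p}{p-1}>2$. It is therefore $\ll e^{-2\mu s}$ once $s_0$ is large depending on $\|f\|_{L^\infty}$; no smallness of $f$ is needed. Integrating $\lambda_s/\lambda=-1+O(e^{-2\mu s})$ gives $\lambda(s)\approx e^{-s}$, which improves \eqref{1.7} to $\lambda\le 2e^{-s}<\frac12 e^{-\mu s}$. The same computation gives $|x_s|\lesssim \lambda e^{-2\mu s}$, which is integrable in $s$; this is what will later yield \eqref{blow-up point}.

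\textbf{Energy and $L^\infty$ bounds.} We pair \eqref{renormalized equation} with $\varepsilon$ in $L^2_\rho$ and apply the spectral gap \eqref{spectral-gap}. The projections vanish by \eqref{orthogonality}, and $\mathrm{Mod}$ is orthogonal to $\varepsilon$ up to the terms involving $\Lambda\psi$ and $\nabla\psi$, which are quadratic. This yields
\[
\frac{d}{ds}\|\varepsilon\|_{L^2_\rho}^2+2c_n\|\varepsilon\|_{H^1_\rho}^2\lesssim |(NL,\varepsilon)_\rho|+|(\tilde L\varepsilon,\varepsilon)_\rho|+\dots
\]
We bound $NL$ pointwise by $|v|^2+\lambda^{2p/(p-1)}|f|$, using $p>5$ and the $L^\infty$ bootstrap to control $|v|^{p}$, and we estimate $\int|\varepsilon|^3\rho$ by $K'e^{-\mu s}\|\varepsilon\|_{L^2_\rho}^2$. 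Since $\mu<c_n/4$, Gronwall gives $\|\varepsilon\|_{L^2_\rho}\le (K_0+C)e^{-\mu s}<\frac K2e^{-\mu s}$. For the $L^\infty$ norm we use the result quoted as Lemma~\ref{Linfty}. The intended argument splits space at $|y|\le R$ and $|y|\ge R$. In the inner zone, parabolic regularity of the linearized flow controls $\varepsilon$ by its $L^2_\rho$ norm. In the outer zone, $p\Phi_n^{p-1}\lesssim |y|^{-2}$ is small relative to the damping $\frac2{p-1}$ coming from $\Lambda$, so a maximum principle/Duhamel argument for the semigroup of $-\Delta+\Lambda$ gives exponential decay at a rate larger than $\mu$. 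The forcing is again bounded by $\lambda^{2p/(p-1)}\|f\|_{L^\infty}$ in $L^\infty$, which is harmless; the Lipschitz regularity of $f$ is used only for local well-posedness and the smoothness of the decomposition. This gives $\|\varepsilon\|_{L^\infty}\le C(K,K_0)e^{-\mu s}<\frac{K'}2e^{-\mu s}$.

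\textbf{Topological conclusion.} After these improvements, exit at a finite $s^*$ can occur only through saturation of \eqref{chca}. Because $\mu_j\ge 2>\mu$ for $j\ge2$, the modulation equation gives, at any time where $\sum|a_j|^2=e^{-2\mu s}$,
\[
\frac{d}{ds}\Big(e^{2\mu s}\sum|a_j|^2\Big)\ge 2(\mu_2-\mu)-Ce^{-\mu s}>0,
\]
so the flow crosses the sphere strictly outward. Suppose that every initial vector $(a_j(s_0))$ in the ball exits at a finite time. Then the map sending $(a_j(s_0))$ to $e^{\mu s^*}(a_j(s^*))$ is continuous, by the transversality just shown, and equals the identity on the boundary sphere. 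This would be a continuous retraction of the ball onto its boundary, contradicting Brouwer's theorem. Hence some choice gives $s^*=\infty$.

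The main obstacle I expect is the $L^\infty$ outer estimate under the non-autonomous forcing and the non-decaying profile $\Phi_n\sim|y|^{-2/(p-1)}$. One must check that the drift $\frac{x_s}{\lambda}\cdot\nabla\varepsilon$ and the term $f(\lambda y+x(t))$ evaluated at large $|y|$ remain uniformly bounded; they do, because only $\|f\|_{L^\infty}$ enters. One must also check that the time-dependent modulation does not destroy the comparison argument. I would first try a weighted maximum-principle bound with a barrier of the form $e^{-\mu s}(1+|y|^{-\gamma})$.
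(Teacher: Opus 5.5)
Your proposal is correct and follows essentially the same route as the paper. The shared steps are: modulation estimates in which the forcing $\lambda^{\frac{2p}{p-1}}f$ is absorbed by taking $s_0$ large; the spectral-gap energy estimate with $\mu<c_n/4$ and Gronwall; the $L^\infty$ bound via interior parabolic regularity plus an outer comparison argument, where the paper uses the spatially constant barrier $C'e^{-\mu s}$ so the unbounded drift plays no role; and the Brouwer no-retraction argument on the unstable modes. (One small slip: at the exit sphere you actually get $\frac{d}{ds}\big(e^{2\mu s}\sum|a_j|^2\big)\ge 2(\mu_2+\mu)-Ce^{-\mu s}$, so your lower bound with $\mu_2-\mu$ is valid but not sharp, and the conclusion is unaffected.)
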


The remainder of this paper is dedicated to proving Proposition \ref{bootstrap}, which directly implies Theorem \ref{thm}.
We next define the exit time
\begin{equation}\label{mmbb}
s^*=\sup\{s\ge s_0: \text{the bounds}\ \eqref{1.7}-\eqref{2.0}\ \text{holds on}\ [s_0,s) \},
\end{equation}
and we assume, by contradiction, that
\begin{equation}\label{sdzx}
s^*<+\infty.
\end{equation}
If we choose $K$, $K'$ and $s_0$ to be sufficiently large, then $s^*>s_0$. 

In the rest of this paper, we study the flow on $[s_0,s^*]$ where $\eqref{1.7}$-$\eqref{2.0}$ hold. We employ the bootstrap argument to show that the bounds $\eqref{1.7}$, $\eqref{1.9}$ and $\eqref{2.0}$ always hold on the time interval $[s_0,s^*]$. This implies that the unstable modes have grown, and \eqref{chca} holds with the equal sign in the exit time $s^*$.  However, this contradicts the Brouwer fixed-point theorem. Hence, $s^* = +\infty$.
\subsection{Modulation equation}
\begin{lemma}\label{modulation}
For the solution on $[s_0,s^*]$, if we take $s_0$ large enough, we have the following estimate on the modulation parameters:
\begin{equation}\label{semin}
\bigg|\frac{\lambda_s}{\lambda}+1\bigg|+\sum_{j=2}^{n+1}|(a_j)_s-\mu_ja_j|+\left|\frac{x_s}{\lambda}\right|\lesssim||\varepsilon||^2_{L^2_\rho}+\sum_{j=2}^{n+1}|a_j|^2+|\lambda(s)|^{\frac{2p}{p-1}}.
\end{equation}
\end{lemma}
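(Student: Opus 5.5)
We obtain the estimate by projecting the renormalized equation \eqref{renormalized equation} onto the finitely many directions that are killed by the orthogonality conditions \eqref{orthogonality}, namely $\psi_1=\Lambda\Phi_n/\|\Lambda\Phi_n\|_\rho$, $\psi_2,\dots,\psi_{n+1}$ and $\partial_k\Phi_n$, $1\le k\le 3$. Differentiating $(\varepsilon,\psi_j)_\rho=0$ and $(\varepsilon,\partial_k\Phi_n)_\rho=0$ in $s$ gives $(\partial_s\varepsilon,\psi_j)_\rho=(\partial_s\varepsilon,\partial_k\Phi_n)_\rho=0$. Since $L_n$ is self-adjoint in $L^2_\rho$ and $\psi_j,\partial_k\Phi_n$ are eigenfunctions (Proposition \ref{prop:spectral-gap}), we also have $(L_n\varepsilon,\psi_j)_\rho=-\mu_j(\varepsilon,\psi_j)_\rho=0$, and likewise for $\partial_k\Phi_n$. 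Hence for each such direction $\phi$ we obtain
\[
(\mathrm{Mod},\phi)_\rho=-(\tilde L(\varepsilon),\phi)_\rho-(NL,\phi)_\rho .
\]

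Next we write out $(\mathrm{Mod},\phi)_\rho$. Set $U=(\frac{\lambda_s}{\lambda}+1,\ (a_j)_s-\mu_ja_j,\ \frac{x_s}{\lambda})\in\mathbb R^{n+4}$. By the orthonormality \eqref{nonbus} and radial symmetry, $(\partial_k\Phi_n,\psi_j)_\rho=0$ and $(\partial_k\Phi_n,\partial_l\Phi_n)_\rho=\delta_{kl}\|\partial_k\Phi_n\|_\rho^2$. The leading part of the map $U\mapsto((\mathrm{Mod},\phi)_\rho)_\phi$ is therefore a diagonal invertible matrix $D$, the same one as in the Jacobian in the proof of Lemma \ref{Implicit}. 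The remaining part comes from the terms $\Lambda\psi$ and $\nabla\psi$ multiplied by $\frac{\lambda_s}{\lambda}+1$ and $\frac{x_s}{\lambda}$, and it is bounded by $C\sum|a_j|\,|U|$. Likewise $(\tilde L(\varepsilon),\phi)_\rho$ is linear in $U$ with coefficients $(\Lambda\varepsilon,\phi)_\rho$ and $(\nabla\varepsilon,\phi)_\rho$. Integrating by parts against the Gaussian weight, and using the polynomial decay bounds on $\psi_j,\partial_k\Phi_n$ and their derivatives from Proposition \ref{prop:spectral-gap}, these coefficients are $O(\|\varepsilon\|_{L^2_\rho})$. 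The system thus reads
\[
(D+O(\|\varepsilon\|_{L^2_\rho}+\textstyle\sum|a_j|))U=-(NL,\phi)_\rho .
\]
By the bootstrap bounds \eqref{chca} and \eqref{1.9}, the perturbation of $D$ is $O((K+1)e^{-\mu s_0})$, which is small once $s_0$ is large. So the matrix is invertible and $|U|\lesssim\sum_\phi|(NL,\phi)_\rho|$.

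It remains to bound $(NL,\phi)_\rho$, which is the main point. For the forcing part, $|\lambda^{\frac{2p}{p-1}}(f(\lambda y+x),\phi)_\rho|\le\|f\|_{L^\infty}\|\phi\|_{L^1_\rho}\lambda^{\frac{2p}{p-1}}$, and $\|f\|_{L^\infty}$ is absorbed into the implicit constant. For the nonlinear part, set $N(v)=|\Phi_n+v|^{p-1}(\Phi_n+v)-\Phi_n^p-p\Phi_n^{p-1}v$. Pointwise $|N(v)|\lesssim |\Phi_n|^{p-2}|v|^2+|v|^p$, and here $\Phi_n$ is bounded (smooth and decaying) while $\|v\|_{L^\infty}\lesssim K'e^{-\mu s}+e^{-\mu s}\le1$ by \eqref{2.0} and \eqref{chca}. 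Hence $|N(v)|\lesssim|v|^2$, since $p>5$ gives $|v|^p\le|v|^2$. Since $\phi$ is bounded, we get $|(N(v),\phi)_\rho|\lesssim\|v\|_{L^2_\rho}^2\lesssim\|\varepsilon\|_{L^2_\rho}^2+\sum|a_j|^2$, using that the $\psi_j$ are bounded so that $\|\psi\|_{L^2_\rho}^2\lesssim\sum|a_j|^2$.

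The subtle step is keeping the implicit constant independent of $K,K'$. Only $\|v\|_{L^\infty}\le1$ is needed, and that requires $s_0\ge s_*(K')$, which is consistent with the stated order of choosing the parameters. Combining the three steps yields \eqref{semin}.
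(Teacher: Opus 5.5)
Your proposal is correct and follows essentially the same route as the paper. You project the renormalized equation onto $\psi_1,\dots,\psi_{n+1}$ and $\partial_k\Phi_n$, use orthogonality and self-adjointness to remove $\partial_s\varepsilon$ and $L_n\varepsilon$, and bound $\tilde L(\varepsilon)$ by integration by parts and $NL$ by the quadratic Taylor bound plus $\|f\|_{L^\infty}\lambda^{\frac{2p}{p-1}}$. You then absorb the $O(\|\varepsilon\|_{L^2_\rho}+\sum|a_j|)$ coupling for $s_0$ large; the only difference is cosmetic: you do this by inverting a small perturbation of the diagonal matrix $D$, whereas the paper sums the three projected inequalities and absorbs directly.
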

\begin{proof}
\emph{$\mathbf{Step\ 1.}$}\ Estimate for $|(a_j)_s-\mu_ja_j|$. Take the $L^2_\rho$ scalar product of \eqref{renormalized equation} with $\psi_j$ for $2\le j\le n+1$, from \eqref{orthogonality} and using the fact that the operator $L_n$  is  self-adjoint for the $L^2_{\rho}$ product, and by \eqref{nonbus},
 we have $$-(\text{Mod},\psi_j)_\rho=(F,\psi_j)_\rho,$$ i.e.,
\begin{equation}\label{modul-1}
(a_j)_s-\mu_ja_j=(F,\psi_j)_\rho+\left(\frac{\lambda_s}{\lambda}+1\right)(\Lambda \psi,\psi_j)_\rho+\left(\frac{x_s}{\lambda}\cdot(\nabla\Phi_n+\nabla\psi),\psi_j\right)_\rho.
\end{equation}
We know from Proposition \ref{prop:spectral-gap} that
\begin{equation}\label{chacs}
|(\Lambda \psi,\psi_j)_\rho|\lesssim\sum_{j=2}^{n+1}|a_j|.
\end{equation}
From the orthogonality conditions $$(\partial_k\Phi_n,\psi_j)_\rho=0,\ 1\le k\le 3,\ 2\le j\le n+1,$$ we have 
\begin{equation}\label{modul-2}
\left|\left(\frac{x_s}{\lambda}\cdot(\nabla\Phi_n+\nabla\psi),\psi_j\right)_\rho\right|=\left|\left(\frac{x_s}{\lambda}\cdot\nabla\psi,\psi_j\right)_\rho\right|
\lesssim \sum_{j=2}^{n+1}|a_j|\left|\frac{x_s}{\lambda}\right|.
\end{equation}

We next estimate the term $(F,\psi_j)_\rho$. By Cauchy-Schwarz inequality and integrating by parts, we get
\begin{equation}\label{dells}
\begin{aligned}
|(\tilde{L}(\varepsilon),\psi_j)_\rho|=\left|\left(\left(\frac{\lambda_s}{\lambda}+1\right)\Lambda\varepsilon+\frac{x_s}{\lambda}\cdot\nabla\varepsilon,\psi_j\right)_\rho\right|
\lesssim \left(\left|\frac{\lambda_s}{\lambda}+1\right|+\left|\frac{x_s}{\lambda}\right|\right)||\varepsilon||_{L^2_\rho}.
\end{aligned}
\end{equation}
By $p>5$ and the smallness of $v$, from the Taylor expansion, we have 
\begin{equation}\label{polyonmial-estiamate}
\left||\Phi_n+v|^{p-1}(\Phi_n+v)-\Phi_n^p-p\Phi_n^{p-1}v\right|\lesssim v^2+v^p\lesssim v^2.    
\end{equation}
Thus we have
\begin{equation}\label{NL-estimate}
\begin{aligned}
&\left|(|\Phi_n+v|^{p-1}(\Phi_n+v)-\Phi_n^p-p\Phi_n^{p-1}v,\psi_j)_\rho\right|
\\
&\lesssim \left|(v^2,\psi_j)_\rho\right|\lesssim\left|(\varepsilon^2,\psi_j)_\rho\right|+\sum_{j=2}^{n+1}|a_j|^2\left|(\sum_{j=2}^{n+1}\psi_j^2,\psi_j)_\rho\right|\lesssim ||\varepsilon||_{L^2_\rho}^2+\sum_{j=2}^{n+1}|a_j|^2.
\end{aligned}
\end{equation}
By  \eqref{NL-estimate} and the boundedness of $f$, we obtain
\begin{equation}\label{dellss}
\begin{aligned}
|(NL,\psi_j)_\rho|&\le(\left||\Phi_n+v|^{p-1}(\Phi_n+v)-\Phi_n^p-p\Phi_n^{p-1}v\right|,\psi_j)_\rho+|\lambda|^{\frac{2p}{p-1}}\left(\left|f\right|,\psi_j\right)_\rho\\
&\lesssim  ||\varepsilon||^2_{L^2_\rho}+\sum_{j=2}^{n+1}|a_j|^2+|\lambda|^{\frac{2p}{p-1}}.
\end{aligned}
\end{equation}
Combining \eqref{modul-1}-\eqref{dells} and \eqref{dellss} yields
\begin{equation}\label{cafss}
|(a_j)_s-\mu_ja_j|\lesssim \left|\frac{\lambda_s}{\lambda}+1\right|\left(\sum_{j=2}^{n+1}|a_j|+||\varepsilon||_{L^2_\rho}\right)+\left|\frac{x_s}{\lambda}\right|||\varepsilon||_{L^2_\rho}+\sum_{j=2}^{n+1}|a_j|\left|\frac{x_s}{\lambda}\right|+||\varepsilon||^2_{L^2_\rho}+\sum_{j=2}^{n+1}|a_j|^2+|\lambda|^{\frac{2p}{p-1}}.
\end{equation}
\emph{$\mathbf{Step\ 2.}$}\ Estimate for $\left|\frac{\lambda_s}{\lambda}+1\right|$.
Take the $L^2_\rho$ scalar product of \eqref{renormalized equation} with $\psi_1=\frac{\Lambda \Phi_n}{||\Lambda \Phi_n||^2_{L^2_{\rho}}}$, we get
$$
-\bigg(\frac{\lambda_s}{\lambda}+1\bigg)=(F,\psi_1)_\rho+\bigg(\frac{\lambda_s}{\lambda}+1\bigg)(\Lambda\psi,\psi_1)_\rho+\left(\frac{x_s}{\lambda}\cdot\nabla\psi,\psi_1\right)_\rho.
$$
Then by a same way as in Step 1, we get
\begin{equation}\label{cafss0}
\left|\frac{\lambda_s}{\lambda}+1\right|\lesssim \left|\frac{\lambda_s}{\lambda}+1\right|\left(\sum_{j=2}^{n+1}|a_j|+||\varepsilon||_{L^2_\rho}\right)+||\varepsilon||^2_{L^2_\rho}+\sum_{j=2}^{n+1}|a_j|^2+\left|\frac{x_s}{\lambda}\right|||\varepsilon||_{L^2_\rho}+\sum_{j=2}^{n+1}|a_j|\left|\frac{x_s}{\lambda}\right|+|\lambda|^{\frac{2p}{p-1}}.
\end{equation}
\emph{$\mathbf{Step\ 3.}$}\ Estimate for $\left|\frac{x_s}{\lambda}\right|$. Take the $L^2_\rho$ scalar product of \eqref{renormalized equation} with $\frac{\partial_1 \Phi_n}{||\partial_1 \Phi_n||^2_{L^2_{\rho}}}$, we get
$$
\frac{-x_s}{\lambda}=\bigg(\frac{\lambda_s}{\lambda}+1\bigg)\left(\Lambda\psi,\frac{\partial_1 \Phi_n}{||\partial_1 \Phi_n||^2_{L^2_{\rho}}}\right)_\rho+\left(\frac{x_s}{\lambda}\cdot\nabla\psi,\frac{\partial_1 \Phi_n}{||\partial_1 \Phi_n||^2_{L^2_{\rho}}}\right)_\rho+\left(F,\frac{\partial_1 \Phi_n}{||\partial_1 \Phi_n||^2_{L^2_{\rho}}}\right)_\rho,
$$
By a same way as in Step 1, we obtain
\begin{equation}\label{modul-last}
\left|\frac{x_s}{\lambda}\right|\lesssim \left|\frac{\lambda_s}{\lambda}+1\right|\left(\sum_{j=2}^{n+1}|a_j|+||\varepsilon||_{L^2_\rho}\right)+||\varepsilon||^2_{L^2_\rho}+\sum_{j=2}^{n+1}|a_j|^2+\left|\frac{x_s}{\lambda}\right|||\varepsilon||_{L^2_\rho}+\sum_{j=2}^{n+1}|a_j|\left|\frac{x_s}{\lambda}\right|+|\lambda|^{\frac{2p}{p-1}}.
\end{equation}
\emph{$\mathbf{Step\ 4.}$}\ Choose $s_0$ sufficiently large.
We know from \eqref{cafss}, \eqref{cafss0} and \eqref{modul-last} that
\begin{equation}
\begin{aligned}
&\sum_{j=2}^{n+1}|(a_j)_s-\mu_ja_j|+\left|\frac{\lambda_s}{\lambda}+1\right|+\left|\frac{x_s}{\lambda}\right|\\
&\lesssim \left|\frac{\lambda_s}{\lambda}+1\right|\left(\sum_{j=2}^{n+1}|a_j|+||\varepsilon||_{L^2_\rho}\right)+||\varepsilon||^2_{L^2_\rho}+\sum_{j=2}^{n+1}|a_j|^2+\left|\frac{x_s}{\lambda}\right|||\varepsilon||_{L^2_\rho}+\sum_{j=2}^{n+1}|a_j|\left|\frac{x_s}{\lambda}\right|+|\lambda|^{\frac{2p}{p-1}}.
\end{aligned}
\end{equation}
Then we take $s_0$ large enough, by \eqref{1.9} and \eqref{chca}, we have
\begin{equation}
\begin{aligned}
\sum_{j=2}^{n+1}|(a_j)_s-\mu_ja_j|+\left|\frac{\lambda_s}{\lambda}+1\right|+\left|\frac{x_s}{\lambda}\right|\lesssim||\varepsilon||^2_{L^2_\rho}+\sum_{j=2}^{n+1}|a_j|^2+|\lambda|^{\frac{2p}{p-1}}.
\end{aligned}
\end{equation}
This completes the proof.
\end{proof}

\subsection{Energy estimates with exponential weights}
\begin{lemma}
For $s_0$ large enough, there holds the bound
\begin{equation}\label{Eys}
\frac{d}{ds}||\varepsilon||_{L^2_\rho}^2+c_n||\varepsilon||_{H^1_\rho}^2\lesssim \sum_{j=2}^{n+1}|a_j|^4+|\lambda|^{\frac{4p}{p-1}},
\end{equation}
where the constant $c_n>0$ is given by \eqref{spectral-gap}.
\end{lemma}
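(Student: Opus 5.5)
We take the $L^2_\rho$ scalar product of the renormalized equation \eqref{renormalized equation} with $\varepsilon$. This gives
\[
\frac12\frac{d}{ds}\|\varepsilon\|_{L^2_\rho}^2+(L_n\varepsilon,\varepsilon)_\rho=(\tilde L(\varepsilon),\varepsilon)_\rho+(NL,\varepsilon)_\rho+(\mathrm{Mod},\varepsilon)_\rho .
\]
\emph{Step 1: coercivity.} By the orthogonality conditions \eqref{orthogonality}, every projection in the spectral gap estimate \eqref{spectral-gap} vanishes, so $(L_n\varepsilon,\varepsilon)_\rho\ge c_n\|\varepsilon\|_{H^1_\rho}^2$.

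\emph{Step 2: the modulation term.} Write $\mathrm{Mod}$ as a combination of $\psi_j$ ($2\le j\le n+1$), $\Lambda\Phi_n$ and $\partial_k\Phi_n$, plus the pieces $\Lambda\psi$ and $\nabla\psi$. The first three families are orthogonal to $\varepsilon$. The remaining pieces give
\[
\Bigl(\bigl|\tfrac{\lambda_s}{\lambda}+1\bigr|+\bigl|\tfrac{x_s}{\lambda}\bigr|\Bigr)\sum_j|a_j|\,\|\varepsilon\|_{L^2_\rho}.
\]
We bound this using Lemma \ref{modulation} together with the bootstrap bounds \eqref{chca} and \eqref{1.9}, and then apply Young's inequality.

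\emph{Step 3: the transport term $\tilde L(\varepsilon)$.} Integrate by parts against the Gaussian weight:
\[
(y\cdot\nabla\varepsilon,\varepsilon)_\rho=-\tfrac32\|\varepsilon\|^2_{L^2_\rho}+\tfrac12\||y|\varepsilon\|_{L^2_\rho}^2 .
\]
The term $\||y|\varepsilon\|_{L^2_\rho}$ is controlled by $\|\varepsilon\|_{H^1_\rho}$ through the weighted Hardy/Poincaré inequality $\||y|\varepsilon\|_{L^2_\rho}\lesssim\|\varepsilon\|_{H^1_\rho}$. Similarly, $(\nabla\varepsilon,\varepsilon)_\rho=\tfrac12(y\varepsilon,\varepsilon)_\rho$. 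Hence
\[
|(\tilde L(\varepsilon),\varepsilon)_\rho|\lesssim\Bigl(\bigl|\tfrac{\lambda_s}{\lambda}+1\bigr|+\bigl|\tfrac{x_s}{\lambda}\bigr|\Bigr)\|\varepsilon\|_{H^1_\rho}^2 .
\]
By Lemma \ref{modulation} and the bootstrap bounds, the prefactor is $\lesssim e^{-2\mu s}+e^{-\frac{2p}{p-1}\mu s}$, so for $s_0$ large this term is absorbed into $\frac{c_n}{4}\|\varepsilon\|_{H^1_\rho}^2$.

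\emph{Step 4: the nonlinear and forcing terms; this is the main obstacle.} Using \eqref{polyonmial-estiamate}, the nonlinearity is pointwise $\lesssim v^2\lesssim \varepsilon^2+\sum_j a_j^2\psi_j^2$, since the $\psi_j$ are bounded.

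For the $\psi$ part we use Young's inequality: $\bigl(\sum_j a_j^2\psi_j^2,|\varepsilon|\bigr)_\rho\lesssim\sum_j|a_j|^4+\eta\|\varepsilon\|^2_{L^2_\rho}$.

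The cubic term $(|\varepsilon|^2,|\varepsilon|)_\rho$ is bounded by $\|\varepsilon\|_{L^\infty}\|\varepsilon\|_{L^2_\rho}^2\le K'e^{-\mu s}\|\varepsilon\|^2_{L^2_\rho}$ thanks to \eqref{2.0}. This is small for $s_0$ large and is absorbed by the coercive term. This absorption is the delicate point: it requires the $L^\infty$ bootstrap bound, and $s_0$ must be chosen after $K'$.

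For the forcing term,
\[
\lambda^{\frac{2p}{p-1}}\bigl(|f(\lambda y+x)|,|\varepsilon|\bigr)_\rho\le\|f\|_{L^\infty}\lambda^{\frac{2p}{p-1}}\|\varepsilon\|_{L^2_\rho}\le\eta\|\varepsilon\|^2_{L^2_\rho}+C(\|f\|_{L^\infty})\lambda^{\frac{4p}{p-1}} .
\]
The constant here depends on $\|f\|_{L^\infty}$, which is allowed because it does not depend on $s$.

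Collecting Steps 1–4 with $\eta\ll c_n$, and multiplying by $2$, gives \eqref{Eys} with constant $c_n$ after adjusting the absorbed fractions. The error in Step 2 is bounded via Lemma \ref{modulation} by a squared combination of the same quantities times smallness, so it is dominated by $\sum_j|a_j|^4+\lambda^{4p/(p-1)}$ plus absorbable terms.
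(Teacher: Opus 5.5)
Your proposal is correct and follows essentially the same route as the paper. The steps match: the energy identity, coercivity from the spectral gap via the orthogonality conditions, integration by parts plus the Gaussian Hardy/Poincar\'e inequality for $\tilde L(\varepsilon)$, the $L^\infty$ bootstrap bound to absorb the cubic term, and Young's inequality for the $\psi^2$ and forcing contributions. The one small difference is the $\mathrm{Mod}$ term: you use orthogonality to reduce $(\mathrm{Mod},\varepsilon)_\rho$ to the $\Lambda\psi$ and $\nabla\psi$ pieces, whereas the paper bounds $\|\mathrm{Mod}\|_{L^2_\rho}$ directly by Lemma \ref{modulation} and absorbs the resulting $\|\varepsilon\|_{L^2_\rho}^4$ term. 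Both work.
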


\begin{proof}
Take the $L^2_\rho$ scalar product of \eqref{renormalized equation} with $\varepsilon$, we have
\begin{equation}\label{xl}
\frac{1}{2}\frac{d}{ds}||\varepsilon||_{L^2_\rho}^2=-(L_n\varepsilon,\varepsilon)_\rho+(F+\text{Mod},\varepsilon)_\rho.
\end{equation}
Combining \eqref{spectral-gap} and the orthogonality condition \eqref{orthogonality} yields
\begin{equation}\label{gaps}
-(L_n\varepsilon,\varepsilon)_\rho\le -c_n||\varepsilon||_{H^1_\rho}^2.
\end{equation}
Using Cauchy's inequality with $\delta$:
\begin{equation}\label{Cauchy inequality}
ab\le \delta a^2+C_\delta b^2,\ \ \left(a,b>0,\ \delta>0,\ C_\delta=\frac{1}{4\delta}\right),
\end{equation}
combining \eqref{chca}, \eqref{semin} and Cauchy-Schwarz inequality, we get
\begin{equation}\label{lyya}
\begin{aligned}
(\text{Mod},\varepsilon)_\rho&\le||\varepsilon||_{L^2_\rho}||\text{Mod}||_{L^2_\rho}\lesssim||\varepsilon||_{L^2_\rho}\left(||\varepsilon||^2_{L^2_\rho}+\sum_{j=2}^{n+1}|a_j|^2+|\lambda|^{\frac{2p}{p-1}}\right)
\\
&\lesssim \delta||\varepsilon||_{L^2_\rho}^2+C_\delta\left(||\varepsilon||^4_{L^2_\rho}+\sum_{j=2}^{n+1}|a_j|^4+|\lambda|^{\frac{4p}{p-1}}\right).
\end{aligned}
\end{equation}
By integrating by parts, Lemma A.1 in \cite{Collot-Pierre-2019} and Cauchy-Schwarz inequality, we obtain
\begin{equation}\label{lyysd}
\begin{aligned}
|(\tilde{L}(\varepsilon),\varepsilon)_\rho|&\lesssim\left|\frac{\lambda_s}{\lambda}+1\right|\left(\int_{\mathbb{R}^3}\varepsilon^2\rho dy+\left|\int_{\mathbb{R}^3}\varepsilon\nabla\cdot(y\varepsilon\rho) dy\right|\right)+\left|\int_{\mathbb{R}^3}\varepsilon\frac{x_s}{\lambda}\cdot\nabla(\varepsilon\rho)dy\right|\\
&\lesssim \left(\left|\frac{\lambda_s}{\lambda}+1\right|+\left|\frac{x_s}{\lambda}\right|\right)||\varepsilon||_{H^1_\rho}^2.
\end{aligned}
\end{equation}
By Cauchy-Schwarz inequality and integrating by parts, we deduce from \eqref{polyonmial-estiamate} and \eqref{Cauchy inequality} that
\begin{equation}\label{xl1}
\begin{aligned}
|(NL,\varepsilon)_\rho|&\lesssim
\left|\int_{\mathbb{R}^3}(\varepsilon^2+\psi^2)\varepsilon\rho dy\right|+\left|\lambda\right|^{\frac{2p}{p-1}}\left|\int_{\mathbb{R}^3}f(\lambda y+x(t))\varepsilon\rho dy\right|\\
&\lesssim ||\varepsilon||_{L^\infty}||\varepsilon||^2_{H^1_\rho}+\sum^{n+1}_{j=2}|a_j|^2||\varepsilon||_{H^1_\rho}+\left|\lambda\right|^{\frac{2p}{p-1}}||f||_\infty\left(\int_{\mathbb{R}^3}\varepsilon^2\rho dy\right)^{\frac{1}{2}}\left(\int_{\mathbb{R}^3}\rho dy\right)^{\frac{1}{2}}
\\
&\lesssim\bigg(||\varepsilon||_{L^\infty}+2\delta\bigg)||\varepsilon||^2_{H^1_\rho}+C_\delta\left(\sum^N_{j=2}|a_j|^4+\left|\lambda\right|^{\frac{4p}{p-1}}\right).
\end{aligned}
\end{equation}

From \eqref{xl}, \eqref{gaps}, \eqref{lyya}, \eqref{lyysd} and \eqref{xl1},   if we choose $s_0$ sufficiently large and $\delta$ small enough, we then get the desired estimate by \eqref{1.7}-\eqref{2.0} and \eqref{semin}.
\end{proof}

\subsection{$L^\infty$ bound of $\varepsilon$}
\begin{lemma}\label{Linfty}
For $ s_0$ large enough, there holds the bound
\begin{equation}\label{dfyusd}
||\varepsilon||_{L^\infty}\le C(K,K_0)e^{-\mu s}.
\end{equation}
\end{lemma}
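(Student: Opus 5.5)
We split $\mathbb R^3$ into a compact region $|y|\le R$ and an exterior region $|y|\ge R$, for a large fixed $R$ depending only on $n,p$. Inside the ball, the weighted $L^2_\rho$ information is converted into pointwise control by parabolic regularity. Outside the ball, the damping term $\frac{2}{p-1}\varepsilon$ contained in $\Lambda$ dominates the potential and the forcing, which allows a maximum principle argument. This follows the scheme of the homogeneous case in \cite{Collot-Pierre-2019}; the only new input is to track the forcing $\lambda^{\frac{2p}{p-1}}f(\lambda y+x(t))$. By \eqref{1.7}, this forcing is bounded in $L^\infty$ by $\|f\|_{L^\infty}e^{-\frac{2p}{p-1}\mu s}\ll e^{-\mu s}$ for $s_0$ large.

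\emph{Interior bound.} We first integrate the energy inequality \eqref{Eys} using \eqref{chca} and \eqref{1.7}, which gives $\int_s^{s+1}\|\varepsilon\|^2_{H^1_\rho}\lesssim (K^2+1)e^{-2\mu s}$. Next we write \eqref{renormalized equation} on $B_{2R}\times[s-1,s]$ as a uniformly parabolic equation. Its drift $y\cdot\nabla$ and potential $p\Phi_n^{p-1}$ are bounded there, and the right-hand side consists of $\mathrm{Mod}$ (controlled by Lemma \ref{modulation}), the quadratic term ($\lesssim |v|\,|v|$ with $\|v\|_{L^\infty}\ll1$), and the forcing. On $B_{2R}$ the weight $\rho$ is bounded below, so local $L^2$ norms are controlled by the weighted ones. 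Local $L^2\to L^\infty$ parabolic estimates (Moser iteration, or the Duhamel formula against the local heat kernel) then give $\|\varepsilon(s)\|_{L^\infty(B_R)}\le C(K)e^{-\mu s}$ for $s\ge s_0+1$. On the initial layer $[s_0,s_0+1]$ we use instead the short-time well-posedness of the renormalized flow, which gives a bound $C(K_0)e^{-\mu s}$ from \eqref{initial-setting}.

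\emph{Exterior bound.} For $|y|\ge R$, Proposition~\ref{self-similar profiles} gives $p\Phi_n^{p-1}\lesssim R^{-2}\ll \frac{2}{p-1}-\mu$, and the same holds for the linearized part of the nonlinearity, since $|v|$ is small. We write the equation as $\partial_s\varepsilon-\Delta\varepsilon+y\cdot\nabla\varepsilon+V\varepsilon=G$, with $V\ge \frac{2}{p-1}-\frac{1}{10}$ outside $B_R$. Here $G$ collects $\mathrm{Mod}$, the purely $\psi$-quadratic terms, and the forcing, all of which are $\lesssim e^{-2\mu s}+\|f\|_\infty e^{-\frac{2p}{p-1}\mu s}$, using the decay of $\psi_j$ and $\Lambda\Phi_n,\nabla\Phi_n$. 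We then compare $\pm\varepsilon$ with a barrier $B(s)=Ae^{-\mu s}$ on the exterior domain. Since $\mu<\frac{2}{p-1}-\frac15$, $B$ is a supersolution, and its boundary values on $|y|=R$ dominate the interior bound once $A\ge C(K)+K_0$, while at $s_0$ we have $A\ge K_0$. The maximum principle on the unbounded domain holds because $\varepsilon$ is bounded. This yields $\|\varepsilon\|_{L^\infty(|y|\ge R)}\le C(K,K_0)e^{-\mu s}$, and combining the two regions gives \eqref{dfyusd}.

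\emph{Main obstacle.} The delicate step is the interior $L^2_\rho\to L^\infty$ estimate. It requires controlling $\varepsilon$ in $L^2$ on $[s-1,s]$ uniformly, including the nonlinear term at quadratic order, without circularly using \eqref{2.0} with the large constant $K'$. I would use \eqref{2.0} only qualitatively, through $K'e^{-\mu s_0}\ll1$, to linearize the nonlinearity as $O(|v|)\varepsilon$ with a small coefficient. This keeps the final constant independent of $K'$, which is exactly what lets the choice $K'\gg C(K,K_0)$ close the bootstrap.
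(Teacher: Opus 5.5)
Your proposal follows essentially the same route as the paper's proof. The paper obtains an interior bound on $B_R$ from the $L^2_\rho$ control via local parabolic regularity, with the initial layer handled by the initial $L^\infty$ smallness. It then closes the exterior region $\{|y|\ge R\}$ by comparison with the spatially constant barrier $C'e^{-\mu s}$, using that $p\Phi_n^{p-1}\lesssim R^{-2}$ is dominated by the damping $\frac{2}{p-1}$ and that $\mathrm{Mod}$, the quadratic terms and the forcing $\lambda^{\frac{2p}{p-1}}f$ are $o(e^{-\mu s})$ once $s_0$ is large.

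One correction is needed, and it is only numerical. Since $\frac{2}{p-1}\le\frac15$ as soon as $p\ge 11$, your thresholds $\frac{1}{10}$ and $\frac15$ (in $V\ge\frac{2}{p-1}-\frac1{10}$ and $\mu<\frac{2}{p-1}-\frac15$) fail for large $p$. Replace them by $p$-dependent margins: choose $R$ so that $p\Phi_n^{p-1}\le\frac{1}{4(p-1)}$ on $|y|\ge R$, and take $\mu\le\frac{1}{2(p-1)}$. This is exactly how the paper secures its margin $\bar\psi_s-L\bar\psi\ge\frac{\bar\psi}{p-1}$.
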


\begin{proof}
We rewrite the equation \eqref{renormalized equation} in the following form:
\begin{equation}\nonumber
\partial_s\varepsilon=L\varepsilon+G,
\end{equation}
where $L\varepsilon=\Delta \varepsilon+T\cdot\nabla \varepsilon+V\varepsilon$ and
$$
T=\frac{\lambda_sy}{\lambda}+\frac{x_s}{\lambda},\ \ V=\left(\frac{\lambda_s}{\lambda}+1\right)\frac{2}{p-1}+p\Phi_n^{p-1}-\frac{2}{p-1},$$
$$
G=|\Phi_n+v|^{p-1}(\Phi_n+v)-\Phi_n^p-p\Phi_n^{p-1}v+\lambda^{\frac{2p}{p-1}}f(\lambda y+x(t))+\text{Mod}.
$$
\emph{$\mathbf{Step\ 1}.$}\ $L^\infty$ estimate inside the ball. For arbitrary $R$ satisfying $0<R<\infty$, we know from \eqref{1.7}, \eqref{chca}, \eqref{1.9}, \eqref{semin} and Proposition \ref{self-similar profiles} that
\begin{equation}\label{fontaines}
||T(y)||_{L^\infty(B_{2R})}\lesssim1,\ \ ||V(y)||_{L^\infty(B_{2R})}\lesssim1.
\end{equation}
 Combining \eqref{polyonmial-estiamate}, $2p/(p-1)>1$ and the boundedness of the inhomogeneous term $f$, 
we obtain similarly that
\begin{equation}\label{fontaines0}
||G||_{L^\infty(B_{2R})}\le C(K)e^{-\mu s},
\end{equation}
for $ s_0$ large enough. We know from \eqref{1.9} that
\begin{equation}\label{jjyx}
||\varepsilon(s)||_{L^2(B_{2R})}\le C(K)e^{-\mu s},\ \ s\in[s_0,s^*].
\end{equation}
Combining \eqref{initial-setting}, \eqref{fontaines}, \eqref{fontaines0} and \eqref{jjyx}, by the initial \(L^\infty\) smallness and the parabolic maximum estimate,
we employ parabolic regularity (see, e.g., Theorem 48.1 in \cite{Quittner-Souplet-2019}) to obtain that, there exists a constant $C'=C'(K,K_0)$ such that
\begin{equation}\label{estimate-inside}
||\varepsilon(s)||_{L^\infty(B_R)}\le \frac{1}{2}C'e^{-\mu s}\ \ s\in[s_0,s^*].
\end{equation}
\emph{$\mathbf{Step\ 2}.$}\ $L^\infty$ bound outside the ball.
 We claim that
\begin{equation}\label{estimate-outside}
 ||\varepsilon||_{L^\infty(|y|\ge R)}\le C'e^{-\mu s},\ \ s\in[s_0,s^*].
\end{equation}
We next use parabolic comparison principle on $\{|y|\ge R\}\times [s_0,s^*]$ to prove the above claim.

Take space homogeneous function $\bar{\psi}(y,s)=C'e^{-\mu s}$. Since $K_0$ is small enough, by \eqref{initial-setting} and \eqref{estimate-inside}, on the boundary, there holds
\begin{equation}\label{dfyus}
\bar{\psi}(s_0,y)\ge |\varepsilon(s_0,y)|,\ \ |y|\ge R,
\end{equation}
and
\begin{equation}\label{qdu}
\bar{\psi}(s,R)\ge |\varepsilon(s,R)|,\ \ s\in[s_0,s^*].
\end{equation}
We compute
\begin{equation}\label{com1}
\begin{aligned}
\partial_s(\bar{\psi}-\varepsilon)-L(\bar{\psi}-\varepsilon)=\bar{\psi}_s-L\bar{\psi}-G,\ |y|\ge R.
\end{aligned}
\end{equation}
If we take $R$ and $s_0$ large enough, since $\mu>0$ is small enough, then by \eqref{large-r}, \eqref{1.7}, \eqref{chca}, \eqref{1.9}, and \eqref{semin},
we have
\begin{equation}\label{com}
\begin{aligned}
\bar{\psi}_s-L\bar{\psi}&=
\bigg[\frac{2}{p-1}-\mu-\left(\frac{\lambda_s}{\lambda}+1\right)\frac{2}{p-1}-p\Phi_n^{p-1}\bigg]\bar{\psi}\ge\frac{\bar{\psi}}{p-1}.
\end{aligned}
\end{equation}
Take $s_0$  large enough, by $p>5$ and the boundedness of $f$, combining \eqref{1.7}, \eqref{chca}, \eqref{1.9}, \eqref{2.0}, \eqref{semin}  and \eqref{polyonmial-estiamate} yields
\begin{equation}\label{com2}
\begin{aligned}
|G|\lesssim v^2+|\lambda|^{\frac{2p}{p-1}}||f||_{L^\infty}+|\text{Mod}|\le \frac{\bar{\psi}}{p-1}.
\end{aligned}
\end{equation}
From \eqref{com1}, \eqref{com} and \eqref{com2}, we have
\begin{equation}\nonumber
\begin{aligned}
\partial_s(\bar{\psi}-\varepsilon)-{L}(\bar{\psi}-\varepsilon)\ge0\ \ \text{on}\ \{|y|\ge R\}\times [s_0,s^*].
\end{aligned}
\end{equation}
Then combining the boundary conditions \eqref{dfyus} and \eqref{qdu},  we employ parabolic comparison principle to get
$$
\varepsilon(s,y)\le \bar{\psi}\ \ \text{on}\ \{|y|\ge R\}\times [s_0,s^*].
$$
We can show similarly that
$$
-\bar{\psi}\le \varepsilon(s,y)\ \ \text{on}\ \{|y|\ge R\}\times [s_0,s^*],
$$
which proves the claim.
 We complete the proof by \eqref{estimate-inside} and \eqref{estimate-outside}.
\end{proof}
We choose sufficiently large $K'$ satisfying $K'\gg C(K,K_0)$, then from \eqref{dfyusd}, we get
\begin{equation}\label{dfyusdA}
||\varepsilon(s)||_{L^\infty}\le {K'}e^{-\mu s},\
 \ s\in[s_0,s^*].
\end{equation}

\subsection{Conclusion}
We are now in the position to conclude the proof of Proposition \ref{bootstrap}.
\begin{proof}[Proof of Proposition \ref{bootstrap}]
Let us first recall the exit time
$$
s^*=\sup\{s\ge s_0\ \text{such that}\ \eqref{1.7}-\eqref{2.0}\ \text{holds on}\ [s_0,s) \}.
$$
\emph{$\mathbf{Step\ 1}.$}\ \emph{Improved scaling control.} From $\frac{2p}{p-1}>2$, by \eqref{1.7}, \eqref{chca},  \eqref{1.9} and \eqref{semin}, we get
\begin{equation}\label{fky}
\left|\frac{\lambda_s}{\lambda}+1\right|\lesssim K^2e^{-2\mu s}.
\end{equation}
Integrating both sides from \(s_0\) to \(s\), we obtain
\[
\left|
\log \lambda(s)-\log \lambda(s_0)+s-s_0
\right|
\lesssim
K^2\int_{s_0}^s e^{-2\mu\tau}\,d\tau
\le
\frac{K^2}{2\mu}e^{-2\mu s_0}.
\]
Since \(\lambda(s_0)=e^{-s_0}\), it follows that
\[
\log \lambda(s)
=
-s+O(e^{-2\mu s_0}).
\]
Exponentiating the above identity yields
\[
\lambda(s)
=
e^{-s}e^{O(e^{-2\mu s_0})}
=
e^{-s}\left(1+O(e^{-2\mu s_0})\right).
\]
In particular, for \(s_0\) sufficiently large,
\begin{equation}\label{poaf}
   \frac12 e^{-s}\le \lambda(s)\le 2e^{-s}. 
\end{equation}

\emph{$\mathbf{Step\ 2}.$}\ \emph{Improved $L^2_\rho$ bound.}
 By $\frac{p}{p-1}>1$, for $ s_0$ large enough, we know from \eqref{1.7}, \eqref{chca} and \eqref{Eys}  that
\begin{equation}\label{bfa}
\begin{aligned}
\frac{d}{ds}||\varepsilon||_{L^2_\rho}^2+c_n||\varepsilon||_{H^1_\rho}^2\lesssim e^{-4\mu s}+e^{\frac{-4p\mu s}{p-1}}\lesssim e^{-4\mu s}.
\end{aligned}
\end{equation}
 We next set
$
0<\mu\le\frac{c_n}{4}.
$
From \eqref{bfa}, we get
$$
\frac{d}{ds}||\varepsilon||_{L^2_\rho}^2\lesssim  e^{-4\mu s}-2\mu||\varepsilon||_{L^2_\rho}^2,
$$
then by Gronwall inequality, from \eqref{initial-setting}, for $s_0$ large enough, there holds
\begin{equation}\nonumber
\begin{aligned}
||\varepsilon(s)||_{L^2_\rho}^2
\lesssim e^{-2\mu (s-s_0)} \left(||\varepsilon(s_0)||_{L^2_\rho}^2+\int_{s_0}^se^{-4\mu \tau}d\tau\right)\lesssim K_0^2e^{-2\mu s}.
\end{aligned}
\end{equation}
Then, we take $K$ large enough such that
\begin{equation}\label{iiz}
\begin{aligned}
||\varepsilon(s)||_{L^2_\rho}
\le \frac{K}{2}e^{-\mu s}.
\end{aligned}
\end{equation}
\emph{$\mathbf{Step\ 3}$.}\ \emph{Topological argument.}
Set
\[
A(s)=\left(a_j(s)e^{\mu s}\right)_{2\le j\le n+1}\in\mathbb R^n,
\]
and let \(\mathcal B\) be the closed unit ball in \(\mathbb R^n\). Combining the definitions of $s^*$ and the contradiction assumption \eqref{sdzx}, along with \eqref{dfyusdA}, \eqref{poaf} and \eqref{iiz}, then through a continuity argument and \eqref{chca}, the exit
occurs through the unstable modes, namely
\[
|A(s^*)|^2=\sum_{j=2}^{n+1}|a_j(s^*)|^2e^{2\mu s^*}=1.
\]
Moreover, from the modulation equations we have
\[
\begin{aligned}
\frac12\frac{d}{ds}|A(s)|^2
&=
\sum_{j=2}^{n+1}
a_j e^{2\mu s}
\left[
(\mu_j+\mu)a_j+\big((a_j)_s-\mu_j a_j\big)
\right]  \\
&\ge
\mu e^{2\mu s}\sum_{j=2}^{n+1}|a_j|^2
-
Ce^{-\mu s}.
\end{aligned}
\]
Therefore, at \(s=s^*\),
\[
\left.\frac12\frac{d}{ds}|A(s)|^2\right|_{s=s^*}
\ge
\mu-Ce^{-\mu s^*}>0
\]
provided \(s_0\) is sufficiently large. Hence the vector field is strictly
outgoing at the exit boundary.

By contradiction assumption, we know that \(s^*<\infty\) for every initial choice
\(A(s_0)\in\mathcal B\). Then we may define the exit map
\[
\Gamma:\mathcal B\to\partial\mathcal B,\qquad
\Gamma(A(s_0))=A(s^*).
\]
The continuous dependence of the flow on the initial data, together with the
strictly outgoing property, implies that \(\Gamma\) is continuous. Moreover,
if \(A(s_0)\in\partial\mathcal B\), then \(s^*=s_0\) and hence
$\Gamma(A(s_0))=A(s_0)$.
Thus \(\Gamma\) is a continuous retraction from \(\mathcal B\) onto
\(\partial\mathcal B\).
Equivalently, the map \(-\Gamma:\mathcal B\to\mathcal B\) is continuous.
By the Brouwer fixed point theorem, there exists \(A_0\in\mathcal B\) such
that
$A_0=-\Gamma(A_0)$.
Since \(\Gamma(A_0)\in\partial\mathcal B\), we have \(A_0\in\partial\mathcal B\).
But on \(\partial\mathcal B\), \(\Gamma\) is the identity. Hence
$A_0=-A_0,$
which implies \(A_0=0\), contradicting \(A_0\in\partial\mathcal B\). Therefore
there exists an initial parameter \(A(s_0)\in\mathcal B\) such that the
corresponding solution does not exit the bootstrap regime. This concludes the proof of Proposition \ref{bootstrap}.
\end{proof}

Next we present the proof of Theorem \ref{thm}.
\begin{proof}[Proof of Theorem \ref{thm} ]
\emph{$\mathbf{Step\ 1}.$} \emph{Finite time blow-up}.
From \eqref{renormalized time}, we get $\frac{dt}{ds}=\lambda^2(t(s))=\lambda^2(s)$, then by
\eqref{poaf}, we have
$$
\int_{s_0}^{+\infty}\frac{1}{4}e^{-2s}ds\le T=\int_{s_0}^{+\infty}\lambda^2(s)ds\le\int_{s_0}^{+\infty}4e^{-2s}ds,
$$
then by $\lambda_0=\lambda(s_0)=e^{-s_0}$, there holds $$\frac{\lambda_0^2}{8}\le T\le 2\lambda_0^2.$$
In addition, by
\eqref{poaf}, we have
\begin{equation}\label{times}
\frac{e^{-2s}}{8}\le T-t=\int_s^\infty\lambda^2(s)ds\le 2e^{-2s}.
\end{equation}
By $\lambda_s=\lambda_tt_s=\lambda^2\lambda_t$, combining
\eqref{fky}, we get 
\begin{equation}\label{pams}
 \left|\lambda\lambda_t+1\right|\lesssim (T-t)^{\mu}.
\end{equation}
 Then integrating \eqref{pams} from $t$ to $T$ and using the boundary condition $\lambda(T)=0$, we obtain the asymptotic behavior
\begin{equation}\label{time0s}
\lambda(t)=\sqrt{2(T-t)}(1+o(1)),\ \ t\to T.
\end{equation}
From \eqref{renormalized time}, \eqref{semin} and \eqref{poaf}, we have
$$
\int_0^T\left|{x_t}\right|dt=\int_{s_0}^\infty\left|{x_s}\right|ds\lesssim \int_{s_0}^\infty e^{-(1+2\mu) s}ds<\infty,
$$
Thus, \eqref{blow-up point} is proved.

\emph{$\mathbf{Step\ 2.}$}\ \emph{Asymptotic stability of the  profiles $\Phi_n$ above scaling.}
Since the eigenvectors $\psi_j$ are bounded, then by \eqref{chca} and \eqref{2.0} we have
$
|v|\lesssim e^{-\mu s},
$
then combining \eqref{times} yields
\begin{equation}\label{frsv}
||v(t)||_{L^\infty}\lesssim(T-t)^\frac{\mu}{2}\to 0,\ \ \text{as}\ t\to T.
\end{equation}
From \eqref{re}, we have
\begin{equation}\label{js}
\begin{aligned}
u(t,x)&=\frac{1}{[2(T-t)]^{\frac{1}{p-1}}}\Phi_n\left(\frac{x-x(t)}{\sqrt{2(T-t)}}\right)+\frac{1}{\lambda(t)^\frac{2}{p-1}}\Phi_n\left(\frac{x-x(t)}{\lambda(t)}\right)\\
&\ \ \ \  -\frac{1}{[2(T-t)]^{\frac{1}{p-1}}}\Phi_n\left(\frac{x-x(t)}{\sqrt{2(T-t)}}\right)+\frac{1}{\lambda(t)^\frac{2}{p-1}}v(s,y)\\
&=\frac{1}{[2(T-t)]^{\frac{1}{p-1}}}\left(\Phi_n+\bar{u}\right)\left(t,\frac{x-x(t)}{\sqrt{2(T-t)}}\right),
\end{aligned}
\end{equation}
where $$\bar{u}:=[2(T-t)]^{\frac{1}{p-1}}\left[\frac{1}{\lambda(t)^\frac{2}{p-1}}\Phi_n\left(\frac{x-x(t)}{\lambda(t)}\right) -\frac{1}{[2(T-t)]^{\frac{1}{p-1}}}\Phi_n\left(\frac{x-x(t)}{\sqrt{2(T-t)}}\right)+\frac{1}{\lambda(t)^\frac{2}{p-1}}v(s,y)\right].$$
Combining \eqref{time0s} and \eqref{frsv}, we get
\begin{equation}\label{reminder term}
  \lim_{t\to T}||\bar{u}(t)||_{L^\infty(\mathbb{R}^3)}=0,   
\end{equation}
and
\eqref{stability} is proved. 

We next show that \(x(T)\) is a blow-up point. By \eqref{js} and \eqref{reminder term}, we have
\[
|u(t,x(t))|
=
\frac{1}{[2(T-t)]^{\frac{1}{p-1}}}\left|\Phi_n(0)+\bar{u}(t,0)\right|
\to+\infty
\]
as \(t\to T^-\). Hence, since \(x(t)\to x(T)\), the point \(x(T)\) is a
blow-up point.
\end{proof}
We next prove the
Lipschitz dependence of the constructed set of solutions in this paper.
\begin{proposition}
\label{prop:lipschitz-source}
Assume that $
f\in L^\infty\cap C^{0,1}(\mathbb R^3)$. Let
\[
\bar u_0^{(1)},\bar u_0^{(2)}
\in V_n^\perp\cap B_{L^\infty}(0,\delta)
\]
be two stable initial perturbations. 
Let \(s_0\) be sufficiently large. Then the parameters $(a_j^{(1)}(s_0)_{2\le j\le n+1}$ and  $(a_j^{(2)}(s_0)_{2\le j\le n+1}$  given by Proposition \ref{bootstrap} satisfy
\[
\sum_{j=2}^{n+1}
\left|
a_j^{(1)}(s_0)-a_j^{(2)}(s_0)
\right|^2
\lesssim
\left\|
\bar u_0^{(1)}-\bar u_0^{(2)}
\right\|_{L^2_\rho}^2.
\]
In particular,
\[
\sum_{j=2}^{n+1}
\left|
a_j^{(1)}(s_0)-a_j^{(2)}(s_0)
\right|
\lesssim
\left\|
\bar u_0^{(1)}-\bar u_0^{(2)}
\right\|_{L^\infty}.
\]
Consequently, the map
\[
\bar u_0\mapsto
\big(a_2(\bar u_0),\ldots,a_{n+1}(\bar u_0)\big)
\]
is Lipschitz. Moreover, the corresponding blow-up times satisfy
\[
\left|T^{(1)}-T^{(2)}\right|
\lesssim
\left\|
\bar u_0^{(1)}-\bar u_0^{(2)}
\right\|_{L^\infty}.
\]
\end{proposition}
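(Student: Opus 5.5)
We will compare the two bootstrap solutions constructed in Proposition \ref{bootstrap} directly in renormalized variables. This follows the Lipschitz argument of \cite{Collot-Pierre-2019}, with extra care for the non-autonomous forcing term.

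\textbf{Setup.} Both solutions start at the same $s_0$, with the same $\lambda_0=e^{-s_0}$ and the same $x_0$. Let $(\lambda^{(i)},x^{(i)},a^{(i)}_j,\varepsilon^{(i)})$ denote their modulation parameters and remainders, $i=1,2$. The renormalized times $s$ differ for the two solutions, but we compare them at equal $s$, which is legitimate since both flows are defined for all $s\ge s_0$. Define the differences
\[
\triangle\varepsilon=\varepsilon^{(1)}-\varepsilon^{(2)},\qquad \triangle a_j=a^{(1)}_j-a^{(2)}_j,\qquad \triangle\ell=\log\lambda^{(1)}-\log\lambda^{(2)},\qquad \triangle z=\frac{x^{(1)}-x^{(2)}}{\lambda^{(1)}}.
\]
Subtracting the two copies of \eqref{renormalized equation} gives
\[
\partial_s\triangle\varepsilon+L_n\triangle\varepsilon=\triangle F+\triangle\mathrm{Mod}.
\]
The only new term, compared with \cite{Collot-Pierre-2019}, is the difference of forcing terms
\[
(\lambda^{(1)})^{\frac{2p}{p-1}}f(\lambda^{(1)}y+x^{(1)})-(\lambda^{(2)})^{\frac{2p}{p-1}}f(\lambda^{(2)}y+x^{(2)}).
\]
We will bound it using $f\in L^\infty\cap C^{0,1}$ and \eqref{poaf}:
\[
\lesssim e^{-\frac{2p}{p-1}s}\Big(|\triangle\ell|+\|f\|_{C^{0,1}}\big(|\triangle\ell|\,|y|+|\triangle z|\big)\lambda^{(1)}\Big).
\]
The polynomial weight $|y|$ is harmless in $L^2_\rho$. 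Since $\frac{2p}{p-1}>2>2\mu$, this term is a small multiple $e^{-2s}$ of the parameter differences.

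\textbf{Step 1: differential inequalities.} Projecting onto $\psi_j$, $\Lambda\Phi_n$ and $\partial_k\Phi_n$ as in Lemma \ref{modulation}, and using the bootstrap smallness $e^{-\mu s}$ for each solution together with a quadratic Taylor estimate for the difference of nonlinearities, we will show
\[
|\partial_s\triangle\ell|+|\partial_s\triangle z|+\sum_j|(\triangle a_j)_s-\mu_j\triangle a_j|\lesssim e^{-\mu s}\big(\|\triangle\varepsilon\|_{L^2_\rho}+|\triangle a|\big)+e^{-2s}\big(|\triangle\ell|+|\triangle z|\big).
\]
The energy identity with the spectral gap \eqref{spectral-gap} and the orthogonality \eqref{orthogonality} will give
\[
\frac{d}{ds}\|\triangle\varepsilon\|^2_{L^2_\rho}+c_n\|\triangle\varepsilon\|^2_{H^1_\rho}\lesssim e^{-\mu s}\big(|\triangle a|^2+\|\triangle\varepsilon\|^2_{H^1_\rho}\big)+e^{-2s}\big(|\triangle\ell|^2+|\triangle z|^2\big).
\]
At $s_0$ we have $\triangle\ell=\triangle z=0$ and $\|\triangle\varepsilon(s_0)\|_{L^2_\rho}\lesssim\|\bar u_0^{(1)}-\bar u_0^{(2)}\|_{L^2_\rho}$. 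The latter holds because the decomposition of Lemma \ref{Implicit} depends smoothly on $\bar u_0$ through finite-dimensional projections.

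\textbf{Step 2: unstable-mode argument (main obstacle).} Let $D=\|\bar u_0^{(1)}-\bar u_0^{(2)}\|_{L^2_\rho}$. Suppose $|\triangle a(s_0)|\ge MD$ for a large constant $M$. Since $\mu_j\ge\mu_2>2$, the inequalities of Step 1 should make the unstable difference dominate. Concretely, along the flow $|\triangle a(s)|\gtrsim e^{\mu_2(s-s_0)/2}|\triangle a(s_0)|$, while $\|\triangle\varepsilon\|_{L^2_\rho}$, $|\triangle\ell|$ and $|\triangle z|$ stay $\ll|\triangle a|$. To establish this, we run a secondary bootstrap on the cone $\{\|\triangle\varepsilon\|+|\triangle\ell|+|\triangle z|\le\eta|\triangle a|\}$ with $\eta$ small. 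On this cone, $\frac{d}{ds}|\triangle a|^2\ge 3|\triangle a|^2$, and $\|\triangle\varepsilon\|$ is dragged along at rate $e^{-\mu s}|\triangle a|$.

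This exponential growth contradicts the uniform bound $|\triangle a(s)|\le 2e^{-\mu s}$, which holds because both solutions remain in the bootstrap regime for all $s$. Hence $|\triangle a(s_0)|\lesssim D\lesssim\|\bar u_0^{(1)}-\bar u_0^{(2)}\|_{L^\infty}$, which is the claimed Lipschitz estimate.

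The delicate points are keeping the cone invariant despite the $e^{-\mu s}$ couplings, which requires $s_0$ large, and handling the $\triangle\ell,\triangle z$ couplings, which the $e^{-2s}$ factor from $f$ makes integrable. A second bound from Step 1, in the regime $|\triangle a(s_0)|\lesssim D$, then gives
\[
\sup_{s\ge s_0}\big(|\triangle\ell(s)|+|\triangle z(s)|\big)\lesssim D.
\]

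\textbf{Step 3: blow-up time.} We have $T^{(i)}=\int_{s_0}^\infty(\lambda^{(i)})^2\,ds$. Since $|(\lambda^{(1)})^2-(\lambda^{(2)})^2|\lesssim e^{-2s}|\triangle\ell|$, integrating gives
\[
|T^{(1)}-T^{(2)}|\lesssim e^{-2s_0}\sup_{s}|\triangle\ell|\lesssim\|\bar u_0^{(1)}-\bar u_0^{(2)}\|_{L^\infty}.
\]
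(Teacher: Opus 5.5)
Your treatment of the forcing difference, the linearized modulation and energy inequalities for the differences, and the final integration of $(\lambda^{(1)})^2-(\lambda^{(2)})^2$ match the paper's Steps 1--3 and 5. The key step is genuinely different.

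\textbf{Comparison of the key step.} The paper uses trapping to force the coefficient of $e^{\mu_j s}$ in the Duhamel formula for $\triangle a_j$ to vanish, i.e.\ $\triangle a_j(s)=-\int_s^\infty e^{\mu_j(s-\tau)}(\cdots)\,d\tau$. It then closes with the weighted suprema $M=\sup_s e^{\mu s}|\triangle a(s)|$ and $\widetilde M=\sup_s e^{2\mu s}\|\triangle\varepsilon(s)\|^2_{L^2_\rho}$. You instead use a forward dichotomy: if the unstable difference dominates at $s_0$, an invariant cone keeps it dominant, and it grows like $e^{3(s-s_0)/2}$, contradicting $|\triangle a|\le 2e^{-\mu s}$. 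Your route needs only differential inequalities. The paper's route controls $\triangle a(s)$ for all $s\ge s_0$ at once; you need that too for the blow-up time, but you do not obtain it as written.

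\textbf{First repair: the variable $\triangle z$.} The normalization $\triangle z=(x^{(1)}-x^{(2)})/\lambda^{(1)}$ is the wrong variable. One has $\partial_s\triangle z=\frac{x^{(1)}_s-x^{(2)}_s}{\lambda^{(1)}}-\frac{\lambda^{(1)}_s}{\lambda^{(1)}}\triangle z$ with $-\lambda^{(1)}_s/\lambda^{(1)}\approx 1$. So your bound on $|\partial_s\triangle z|$ is false. The claim $\sup_s|\triangle z|\lesssim D$ also fails in general, since $\triangle z(s)\approx e^{s}\bigl(x^{(1)}(T^{(1)})-x^{(2)}(T^{(2)})\bigr)$ is unbounded unless the blow-up points coincide. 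Use $\triangle x=x^{(1)}-x^{(2)}$ instead: its derivative carries a factor $\lambda\sim e^{-s}$, and it enters the forcing as $e^{-\frac{2p}{p-1}s}|\triangle x|$. Your cone argument happens to survive with $\triangle z$, because its growth rate $1$ is below the rate $3/2$ of $|\triangle a|$.

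\textbf{Second repair: the uniform bound on $\triangle\ell$.} Your derivation of $\sup_s|\triangle\ell(s)|\lesssim D$ ``from Step 1 in the regime $|\triangle a(s_0)|\lesssim D$'' does not follow. Bounding $\triangle\ell(s)=\int_{s_0}^s\partial_s\triangle\ell$ requires
$\int_{s_0}^\infty e^{-\mu\tau}\bigl(|\triangle a(\tau)|+\|\triangle\varepsilon(\tau)\|_{L^2_\rho}\bigr)d\tau\lesssim D$.
Forward integration from $s_0$ amplifies $\triangle a$ by $e^{\mu_j(s-s_0)}$, and trapping only gives $|\triangle a|\le 2e^{-\mu s}$, with no factor $D$.

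The fix is to run your exclusion argument from every $s_1\ge s_0$. Both flows remain trapped on $[s_1,\infty)$, and the cone is still invariant even though $\triangle\ell(s_1),\triangle x(s_1)\neq 0$. This gives
$|\triangle a(s_1)|\le \eta^{-1}\bigl(\|\triangle\varepsilon(s_1)\|_{L^2_\rho}+|\triangle\ell(s_1)|+|\triangle x(s_1)|\bigr)$ for all $s_1$.
Inserting this into your energy and modulation inequalities, a Gronwall argument on $\|\triangle\varepsilon\|_{L^2_\rho}+|\triangle\ell|+|\triangle x|$ yields the uniform bound by $D$ needed in your Step 3.
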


\begin{proof}
Let
\[
A^{(i)}
=
\big(a_2^{(i)}(s_0),\ldots,a_{n+1}^{(i)}(s_0)\big),
\qquad i=1,2,
\]
be two choices of unstable parameters given by Proposition
\ref{bootstrap}. Denote by $
\varepsilon^{(i)}$,\  $a_j^{(i)}$,\ $\lambda^{(i)}$,\ $x^{(i)}$
the corresponding renormalized solutions and modulation parameters. We set
\[
v^{(i)}
=
\varepsilon^{(i)}
+
\psi^{(i)},
\qquad \psi^{(i)}=\sum_{j=2}^{n+1}a_j^{(i)}\psi_j,
\qquad
\beta:=\frac{2p}{p-1}.
\]
Define the differences
\[
\Delta\varepsilon
:=
\varepsilon^{(1)}-\varepsilon^{(2)},\qquad
\Delta a_j
:=
a_j^{(1)}-a_j^{(2)},\qquad
\Delta v
:=
v^{(1)}-v^{(2)},
\]
and
\[
\Delta \ell
:=
\log\frac{\lambda^{(1)}}{\lambda^{(2)}},
\qquad
\Delta x
:=
x^{(1)}-x^{(2)}.
\]

We compare the two renormalized flows at the same renormalized time \(s\).
Recall that each \(\varepsilon^{(i)}\) satisfies
\[
\partial_s\varepsilon^{(i)}+L_n\varepsilon^{(i)}
=
\widetilde L^{(i)}(\varepsilon^{(i)})
+
\text{NL}^{(i)}
+
\text{Mod}^{(i)},
\]
where
\[
\widetilde L^{(i)}(\varepsilon^{(i)})
=
\left(
\frac{\lambda_s^{(i)}}{\lambda^{(i)}}+1
\right)
\Lambda\varepsilon^{(i)}
+
\frac{x_s^{(i)}}{\lambda^{(i)}}
\cdot\nabla\varepsilon^{(i)}
,\]
$$\text{Mod}^{(i)}=\sum_{j=2}^{n+1}[\mu_ja_j^{(i)}-(a_j)^{(i)}_s]\psi_j+\left(\frac{\lambda^{(i)}_s}{\lambda^{(i)}}+1\right) \left(\Lambda\Phi_n+\Lambda\psi^{(i)}\right)+\frac{x^{(i)}_s}{\lambda^{(i)}}\cdot(\nabla\Phi_n+\nabla\psi^{(i)}),$$
and
\[
\begin{aligned}
\text{NL}^{(i)}
&=
\left(\lambda^{(i)}\right)^\beta
f\left(\lambda^{(i)}y+x^{(i)}\right)
+
|\Phi_n+v^{(i)}|^{p-1}(\Phi_n+v^{(i)})
-\Phi_n^p
-p\Phi_n^{p-1}v^{(i)}.
\end{aligned}
\]
Subtracting the two equations yields
\begin{equation}\label{eq:diff-eps-source}
\partial_s\Delta\varepsilon
+
L_n\Delta\varepsilon
=
\Delta\widetilde L
+
\Delta \text{NL}
+
\Delta \text{Mod}.
\end{equation}

\medskip

\noindent
\emph{Step 1. Estimate of the difference of the source terms.}
Set
\[
\mathcal F^{(i)}(s,y)
=
\left(\lambda^{(i)}(s)\right)^\beta
f\left(\lambda^{(i)}(s)y+x^{(i)}(s)\right).
\]
Since \(f\in L^\infty\cap C^{0,1}\) and $
\lambda^{(i)}(s)\sim e^{-s},$
we have
\begin{equation}\label{eq:source-diff-Lip}
\left|
\left(\mathcal F^{(1)}-\mathcal F^{(2)},\chi\right)_\rho
\right|
\lesssim
e^{-\beta s}
\left(
|\Delta\ell(s)|+|\Delta x(s)|
\right)
\|\chi\|_{L^2_\rho}
\end{equation}
for every \(\chi\in L^2_\rho\). Indeed,
\[
\begin{aligned}
\mathcal F^{(1)}-\mathcal F^{(2)}
&=
\left[
\left(\lambda^{(1)}\right)^\beta
-
\left(\lambda^{(2)}\right)^\beta
\right]
f\left(\lambda^{(1)}y+x^{(1)}\right)
\\
&\quad
+
\left(\lambda^{(2)}\right)^\beta
\left[
f\left(\lambda^{(1)}y+x^{(1)}\right)
-
f\left(\lambda^{(2)}y+x^{(2)}\right)
\right].
\end{aligned}
\]
Thus,
\[
\left|
\mathcal F^{(1)}-\mathcal F^{(2)}
\right|
\lesssim
e^{-\beta s}|\Delta\ell|
+
e^{-\beta s}
\left(
e^{-s}|\Delta\ell|(1+|y|)
+
|\Delta x|
\right),
\]
which implies \eqref{eq:source-diff-Lip}, using the exponential weight
\(\rho\).

\medskip

\noindent
\emph{Step 2. Difference of the modulation equations.}
Taking the \(L^2_\rho\)-scalar product of
\eqref{eq:diff-eps-source} with
\[
\psi_1=\frac{\Lambda\Phi_n}{\|\Lambda\Phi_n\|_{L^2_\rho}},
\qquad
\psi_j,\quad 2\le j\le n+1,
\qquad
\partial_k\Phi_n,\quad 1\le k\le3,
\]
and using the orthogonality conditions \eqref{orthogonality}, combining \eqref{eq:source-diff-Lip}, we obtain
\begin{equation}\label{eq:diff-modulation-Lip}
\begin{aligned}
&
\left|
\frac{\lambda_s^{(1)}}{\lambda^{(1)}}
-
\frac{\lambda_s^{(2)}}{\lambda^{(2)}}
\right|
+
\left|
\frac{x_s^{(1)}}{\lambda^{(1)}}
-
\frac{x_s^{(2)}}{\lambda^{(2)}}
\right|
+
\sum_{j=2}^{n+1}
\left|
(\Delta a_j)_s-\mu_j\Delta a_j
\right|
\\
&\qquad
\lesssim
e^{-\mu s}
\left(
\|\Delta\varepsilon\|_{L^2_\rho}
+
\sum_{j=2}^{n+1}|\Delta a_j|
\right)
+
e^{-\beta s}
\left(
|\Delta\ell|+|\Delta x|
\right).
\end{aligned}
\end{equation}
Since
\[
\Delta\ell(s_0)=0,
\qquad
\Delta x(s_0)=0,
\]
for \(s_0\) sufficiently large, integrating \eqref{eq:diff-modulation-Lip} gives
\begin{equation}\label{eq:Delta-lambda-x}
|\Delta\ell(s)|+|\Delta x(s)|
\lesssim
\int_{s_0}^s
e^{-\mu\tau}
\left(
\|\Delta\varepsilon(\tau)\|_{L^2_\rho}
+
\sum_{j=2}^{n+1}|\Delta a_j(\tau)|
\right)
d\tau.
\end{equation}

\medskip

\noindent
\emph{Step 3. Energy estimate for the difference.}
Taking the scalar product of \eqref{eq:diff-eps-source} with
\(\Delta\varepsilon\), we get
\[
\frac12\frac{d}{ds}
\|\Delta\varepsilon\|_{L^2_\rho}^2
=
-(L_n\Delta\varepsilon,\Delta\varepsilon)_\rho
+
(\Delta\widetilde L,\Delta\varepsilon)_\rho
+
(\Delta \text{NL},\Delta\varepsilon)_\rho
+
(\Delta \text{Mod},\Delta\varepsilon)_\rho.
\]
By the spectral gap and the orthogonality conditions,
\[
-(L_n\Delta\varepsilon,\Delta\varepsilon)_\rho
\le
-c_n\|\Delta\varepsilon\|_{H^1_\rho}^2.
\]
Using \eqref{eq:diff-modulation-Lip}, the bootstrap bounds, and the source
estimate \eqref{eq:source-diff-Lip}, we obtain
\[
\begin{aligned}
&
|(\Delta\widetilde L,\Delta\varepsilon)_\rho|
+
|(\Delta \text{Mod},\Delta\varepsilon)_\rho|
\\
&\qquad
\lesssim
e^{-\mu s}
\left(
\|\Delta\varepsilon\|_{H^1_\rho}^2
+
\sum_{j=2}^{n+1}|\Delta a_j|^2
\right)
+
e^{-2\beta s}
\left(
|\Delta\ell|^2+|\Delta x|^2
\right).
\end{aligned}
\]
Moreover, the Taylor expansion of the nonlinearity gives
\[
\begin{aligned}
&
\big|
|\Phi_n+v^{(1)}|^{p-1}(\Phi_n+v^{(1)})
-
|\Phi_n+v^{(2)}|^{p-1}(\Phi_n+v^{(2)})
\\
&\qquad
-
p\Phi_n^{p-1}\Delta v
\big|
\lesssim
\left(|v^{(1)}|+|v^{(2)}|\right)|\Delta v|.
\end{aligned}
\]
Since $\|v^{(i)}(s)\|_{L^\infty}\lesssim e^{-\mu s},$
we get
\[
|(\Delta NL,\Delta\varepsilon)_\rho|
\lesssim
e^{-\mu s}
\left(
\|\Delta\varepsilon\|_{H^1_\rho}^2
+
\sum_{j=2}^{n+1}|\Delta a_j|^2
\right)
+
e^{-2\beta s}
\left(
|\Delta\ell|^2+|\Delta x|^2
\right).
\]
Taking \(s_0\) sufficiently large and using \eqref{eq:Delta-lambda-x}, the
terms containing \(\Delta\ell\) and \(\Delta x\) are perturbative. Therefore,
\begin{equation}\label{eq:diff-energy-Lip}
\frac{d}{ds}
\|\Delta\varepsilon\|_{L^2_\rho}^2
+
c_n\|\Delta\varepsilon\|_{L^2_\rho}^2
\lesssim
e^{-\mu s}
\sum_{j=2}^{n+1}|\Delta a_j|^2.
\end{equation}

\medskip

\noindent
\emph{Step 4. Reintegration of the unstable modes.}
Define
\[
M:=
\sup_{s\ge s_0}
e^{\mu s}
\sum_{j=2}^{n+1}|\Delta a_j(s)|,
\qquad
\widetilde M:=
\sup_{s\ge s_0}
e^{2\mu s}
\|\Delta\varepsilon(s)\|_{L^2_\rho}^2.
\]
From \eqref{eq:diff-modulation-Lip}, for \(2\le j\le n+1\),
\[
(\Delta a_j)_s-\mu_j\Delta a_j
=
O\left(
e^{-\mu s}
\left(
\|\Delta\varepsilon\|_{L^2_\rho}
+
\sum_{k=2}^{n+1}|\Delta a_k|
\right)
\right)
+
O\left(
e^{-\beta s}
\left(
|\Delta\ell|+|\Delta x|
\right)
\right).
\]
Reintegrating, we obtain
\[
\begin{aligned}
\Delta a_j(s)
&=
e^{\mu_j(s-s_0)}\Delta a_j(s_0)
\\
&\quad
+
e^{\mu_j s}
\int_{s_0}^{s}
e^{-\mu_j\tau}
O\left(
e^{-\mu\tau}
\left(
\|\Delta\varepsilon(\tau)\|_{L^2_\rho}
+
\sum_{k=2}^{n+1}|\Delta a_k(\tau)|
\right)
\right)
d\tau
\\
&\quad
+
e^{\mu_j s}
\int_{s_0}^{s}
e^{-\mu_j\tau}
O\left(
e^{-\beta\tau}
\left(
|\Delta\ell(\tau)|+|\Delta x(\tau)|
\right)
\right)
d\tau.
\end{aligned}
\]
Since both solutions remain trapped in the bootstrap regime, we have $
|\Delta a_j(s)|\lesssim e^{-\mu s}$.
Hence the coefficient of the exponentially growing term \(e^{\mu_j s}\)
must vanish. Therefore,
\[
\begin{aligned}
\Delta a_j(s_0)e^{-\mu_j s_0}
&+
\int_{s_0}^{\infty}
e^{-\mu_j\tau}
O\left(
e^{-\mu\tau}
\left(
\|\Delta\varepsilon(\tau)\|_{L^2_\rho}
+
\sum_{k=2}^{n+1}|\Delta a_k(\tau)|
\right)
\right)
d\tau
\\
&+
\int_{s_0}^{\infty}
e^{-\mu_j\tau}
O\left(
e^{-\beta\tau}
\left(
|\Delta\ell(\tau)|+|\Delta x(\tau)|
\right)
\right)
d\tau
=0.
\end{aligned}
\]
Using \eqref{eq:Delta-lambda-x}, we infer
\[
|\Delta a_j(s_0)|
\lesssim
e^{-2\mu s_0}\left(M+\sqrt{\widetilde M}\right).
\]
Consequently,
\[
M\lesssim e^{-\mu s_0}\sqrt{\widetilde M}.
\]
On the other hand, reintegrating the energy estimate
\eqref{eq:diff-energy-Lip} yields
\[
\widetilde M
\lesssim
e^{c_n s_0}
\|\Delta\varepsilon(s_0)\|_{L^2_\rho}^2.
\]
Combining the last two estimates gives
\[
|\Delta a_j(s_0)|
\lesssim
\|\Delta\varepsilon(s_0)\|_{L^2_\rho}.
\]
Thus,
\[
\sum_{j=2}^{n+1}
|\Delta a_j(s_0)|^2
\lesssim
\|\Delta\varepsilon(s_0)\|_{L^2_\rho}^2.
\]
Since
\[
\|\Delta\varepsilon(s_0)\|_{L^2_\rho}
\lesssim
\|\bar u_0^{(1)}-\bar u_0^{(2)}\|_{L^\infty},
\]
we obtain the desired Lipschitz dependence of the unstable parameters.

\medskip

\noindent
\emph{Step 5. Lipschitz dependence of the blow-up time.}
The previous estimates imply
\[
\sum_{j=2}^{n+1}|\Delta a_j(s)|
+
\|\Delta\varepsilon(s)\|_{L^2_\rho}
+
|\Delta\ell(s)|
+
|\Delta x(s)|
\lesssim
e^{-\mu(s-s_0)}
\|\Delta\varepsilon(s_0)\|_{L^2_\rho}.
\]
In particular,
\[
\left|
\frac{d}{ds}
\log\frac{\lambda^{(1)}(s)}{\lambda^{(2)}(s)}
\right|
\lesssim
e^{-\mu(s-s_0)}
\|\Delta\varepsilon(s_0)\|_{L^2_\rho}.
\]
Since
\[
\lambda^{(1)}(s_0)=\lambda^{(2)}(s_0)=\lambda_0,
\]
integration gives
\[
\left|
\log\frac{\lambda^{(1)}(s)}{\lambda^{(2)}(s)}
\right|
\lesssim
\|\Delta\varepsilon(s_0)\|_{L^2_\rho}.
\]
Thus,
\[
\lambda^{(2)}(s)
=
\lambda^{(1)}(s)
\left(
1+
O\left(
\|\Delta\varepsilon(s_0)\|_{L^2_\rho}
\right)
\right).
\]
Since
\[
T^{(i)}
=
\int_{s_0}^{\infty}
\left(\lambda^{(i)}(s)\right)^2\,ds,
\]
we obtain
\[
\begin{aligned}
|T^{(1)}-T^{(2)}|
&\le
\int_{s_0}^{\infty}
\left|
\left(\lambda^{(1)}(s)\right)^2
-
\left(\lambda^{(2)}(s)\right)^2
\right|\,ds
\\
&\lesssim
\|\Delta\varepsilon(s_0)\|_{L^2_\rho}
\int_{s_0}^{\infty}
\left(\lambda^{(1)}(s)\right)^2\,ds
\\
&\lesssim
\|\bar u_0^{(1)}-\bar u_0^{(2)}\|_{L^\infty}.
\end{aligned}
\]
This proves the Lipschitz dependence of the blow-up time and completes the
proof.
\end{proof}

\section*{Acknowledgements}
The author is grateful to Prof. Charles Collot for helpful discussions.  K. Zhang is supported by China Scholarship Council (No.202206460045).

\section*{Data Availability Statement}
Data sharing is not applicable to this article as no datasets were generated or analysed during the current study.

\end{document}